\definecolor{webgreen}{rgb}{0,.5,0}
\definecolor{webbrown}{rgb}{.6,0,0}
\theoremstyle{plain}
\newtheorem{theorem}{Theorem}
\newtheorem{lemma}[theorem]{Lemma}
\newtheorem{proposition}[theorem]{Proposition}
\theoremstyle{definition}
\theoremstyle{remark}
\begin{document}

\title{Quad Squares}
\author{Nikhil Byrapuram}
\author{Hwiseo (Irene) Choi}
\author{Adam Ge}
\author{Selena Ge}
\author{Sylvia Zia Lee}
\author{Evin Liang}
\author{Rajarshi Mandal}
\author{Aika Oki}
\author{Daniel Wu}
\author{Michael Yang}
\affil{PRIMES STEP}
\author{Tanya Khovanova}
\affil{MIT}

\maketitle

\begin{abstract}
We study 4-by-4 squares formed by cards from the EvenQuads deck. EvenQuads is a card game with 64 cards where cards have 3 attributes with 4 values in each attribute. A quad is four cards with all attributes the same, all different, or half and half. We define Latin quad squares as squares where the cards in each row and column have different values for each attribute. We define semimagic quad squares as squares where each row and column form a quad. For magic quad squares, we add a requirement that the diagonals have to form a quad. We also define strongly magic quad squares.

We analyze types of semimagic and strongly magic quad squares. We also calculate the number of semimagic, magic, and strongly magic quad squares for quad decks of any size.

These squares can be described in terms of integers. Four integers form a quad when their bitwise XOR is zero.
\end{abstract}

\section{Introduction}

It all started with the game of SET: a trendy card game with a lot of mathematics hidden in it \cite{MGGG}. The game of SET is played with a deck of 81 cards. Each card is characterized by 4 attributes:
\begin{itemize}
\item Number: 1, 2, or 3 symbols.
\item Color: Green, red, or purple.
\item Shading: Empty, striped, or solid.
\item Shape: Oval, diamond, or squiggle.
\end{itemize}

A \textit{set} is formed by three cards that are either all the same or all different in each attribute. The players try to find sets among given cards, and the fastest player wins.

A \textit{magic SET square} is a 3-by-3 grid of SET cards such that each row, column, and diagonal form a set. The magic set squares were classified in \cite{STEPMSS}.

In this paper, we are interested in a different game that is a generalization of the SET game introduced by Rose and Perreira \cite{Rose}. It was initially called SuperSET, but now it is called EvenQuads.

The EvenQuads deck consists of $64 = 4^3$ cards with different objects. The cards have 3 attributes with 4 values each:
\begin{itemize}
\item Number: 1, 2, 3, or 4 symbols.
\item Color: Red, green, yellow, or blue.
\item Shape: Square, icosahedron, circle, or spiral.
\end{itemize}

A \textit{quad} consists of four cards so that for each attribute, the values of the cards must be one of three cases: all the same, all different, or half and half. In our paper \cite{LAGames}, we have also presented the study of several properties and their connections to linear algebra of the games SET and EvenQuads, as well as two other famous games Socks and Spot It!.

We can translate this game from cards to numbers. Consider a card. We can assign a binary string of length 2 representing one of the four values for each attribute. Thus we can view a card as a binary string of length 6. Four cards form a quad if and only if the bitwise XOR of the corresponding four strings is a zero string. Bitwise XOR is also called a parity sum and can be described as addition modulo 2 without carry.

We can convert binary to decimal and view each card as a number. For example, quadruples $\{0,1,2,3\}$ and $\{0,4,8,12\}$ are quads. The official quad deck has 64 cards, and we correspond the cards to integers from 0 to 63. But we can extend this to a deck whose size is any power of 2. We are interested in decks with at least 16 cards, as we will build different 4-by-4 squares out of them.

This paper focuses on analogs of magic SET squares for EvenQuads cards. The magic SET squares are 3-by-3 squares of SET cards so that every row, column, and diagonal forms a set. Analogously, we study squares formed by 16 EvenQuads cards laid out as a 4-by-4 grid. We start with Latin squares, where all rows and columns must be quads of type D (all different). We also study magic and semimagic squares, where different lines are required to be quads. We also introduce a notion of a strongly magic square, where 4 cards with coordinates forming a quad have to form a quad. Latin, magic, and semimagic quad squares are strongly magic squares.

In Section~\ref{sec:differentsquares}, we define and discuss the above-mentioned squares. We use symmetries to reduce semimagic, magic, and strongly magic quad squares to squares with the first row equal to 0, 1, 2, 3, and the first column equal to 0, 4, 8, 12, which we call squares of type C. We prove that the number of semimagic, magic, and strongly magic quad squares using the EvenQuads-$2^n$ deck is $2^n(2^n - 1)(2^n - 2)(2^n - 4)(2^n - 8)$ times the number of corresponding squares of type C.

In Section~\ref{sec:stronglymagicsquares}, we start with enumerating strongly magic quad squares for any deck. For the standard deck, the number of such squares is 839946240. We discuss types of squares and classify all possible strongly magic quad squares for the EvenQuads-16 deck for one attribute. We present 7 cases up to symmetries.

In Section~\ref{sec:numbersmsquares}, we enumerate semimagic quad squares for any sized decks. In particular, the number of semimagic quad squares for the standard deck is 3379298591047680. In Section~\ref{sec:numbermsquares}, we enumerate magic quad squares for any sized decks. In particular, the number of magic quad squares for the standard deck is 58912149381120.

\section{Different Quad Squares}
\label{sec:differentsquares}

\subsection{Quads}

Recall that the EvenQuads deck consists of $64 = 4^3$ cards with different objects. The cards have 3 attributes with 4 values each:
\begin{itemize}
\item Number: 1, 2, 3, or 4 symbols.
\item Color: Red, green, yellow, or blue.
\item Shape: Square, icosahedron, circle, or spiral.
\end{itemize}

A \textit{quad} consists of four cards so that for each attribute, the values of the cards must be one of three cases: all the same, all different, or half and half. We can assume that each attribute takes values in the set $\{0,1,2,3\}$. Then four cards form a quad if and only if the bitwise XOR of the values in each attribute is zero. This is equivalent to saying that each attribute takes values in $\mathbb{Z}_2^2$. Thus, we can view our cards as vectors in $\mathbb{Z}_2^6$. For generalizations, we can consider EvenQuads deck of sizes $2^n$. They correspond to vectors in $\mathbb{Z}_2^n$.

Four vectors $\vec{a}$, $\vec{b}$, $\vec{c}$, and $\vec{d}$ form a quad if and only if (see \cite{CragerEtAl})
\[\vec{a} + \vec{b} + \vec{c} + \vec{d} = \vec{0}.\]
Consider four vectors $\vec{a}$, $\vec{b}$, $\vec{c}$, and $\vec{d}$ forming a quad. By a translation, we can assume that $\vec{a}$ is the origin. Then, from the above equation we get $\vec{d} = - \vec{b} - \vec{c} = \vec{b} + \vec{c}$. This means that vector $\vec{d}$ is in the same plane as the origin and vectors $\vec{b}$ and $\vec{c}$. Thus, four cards form a quad if and only if their endpoints belong to the same plane.

In the rest of the paper, we often use numbers from 0 to $2^n-1$ inclusive to label the cards in the EvenQuads-$2^n$ deck. Four numbers form a quad, if and only if their bitwise XOR is 0.

\subsection{Latin quad squares}

A \textit{Latin square} is an $n$-by-$n$ grid filled with $n$ different symbols, each occurring exactly once in each row and exactly once in each column. There are 576 Latin squares of order 4; see sequence A002860 in the OEIS \cite{OEIS}. A Latin square is called \textit{reduced} if the first column and the first row are in order. We can achieve a reduced Latin square by permuting rows and columns. There are 4 reduced Latin squares of order 4; see sequence A000315.

A natural way to define a \textit{Latin quad square} is as a 4-by-4 grid of EvenQuads cards, such that considering every attribute separately, the grid forms a Latin square. In other words, each Latin quad square is a 4-by-4 grid of EvenQuads cards, where each row and column form a quad of type $D$. As we will see in the next section, we can view a Latin quad square as a Latin semimagic quad square.

One might ask how many different Latin quad squares are possible. It is known that there are 576 different Latin squares of size 4. That means there should be $576^3$ Latin quad squares. Right?

Wrong. Table~\ref{table:impossiblels} shows three Latin squares. What will happen if we use each square as the value for different attributes? The resulting Latin quad square cannot exist because cards 000 and 111 are repeated.

\begin{table}[ht!]
\begin{center}
\begin{tabular} {|c|c|c|c|}
\hline
0 & 1 & 2 & 3 \\ \hline
1 & 0 & 3 & 2 \\ \hline
2 & 3 & 0 & 1 \\ \hline
3 & 2 & 1 & 0 \\ \hline
\end{tabular}
\quad
\begin{tabular} {|c|c|c|c|}
\hline
0 & 1 & 2 & 3 \\ \hline
1 & 0 & 3 & 2 \\ \hline
2 & 3 & 1 & 0 \\ \hline
3 & 2 & 0 & 1 \\ \hline
\end{tabular}
\quad
\begin{tabular} {|c|c|c|c|}
\hline
0 & 1 & 2 & 3 \\ \hline
1 & 0 & 3 & 2 \\ \hline
3 & 2 & 0 & 1 \\ \hline
2 & 3 & 1 & 0 \\ \hline
\end{tabular} \indent
\end{center}
\caption{A combination that does not match a Latin quad square.}
\label{table:impossiblels}
\end{table}

However, Table~\ref{table:possiblels} provides a real example of a Latin quad square, where each particular Latin square corresponds to one attribute.
\begin{table}[ht!]
\begin{center}
\begin{tabular}{|c|c|c|c|}
\hline
0 & 1 & 2 & 3 \\ \hline
1 & 3 & 0 & 2 \\ \hline
2 & 0 & 3 & 1 \\ \hline
3 & 2 & 1 & 0 \\ \hline
\end{tabular} \indent
\quad
\begin{tabular} {|c|c|c|c|}
\hline
0 & 1 & 3 & 2 \\ \hline
2 & 3 & 1 & 0 \\ \hline
3 & 2 & 0 & 1 \\ \hline
1 & 0 & 2 & 3 \\ \hline
\end{tabular} \indent
\quad
\begin{tabular} {|c|c|c|c|}
\hline
0 & 3 & 1 & 2 \\ \hline
3 & 2 & 0 & 1 \\ \hline
2 & 1 & 3 & 0 \\ \hline
1 & 0 & 2 & 3 \\ \hline
\end{tabular}
\end{center}
\caption{A combination that generates a Latin quad square.}
\label{table:possiblels}
\end{table}

In any case, we can say that there are no more than  $576^3$ Latin quad squares.

Suppose we built a Latin quad square using an EvenQuads-16 deck. Then each attribute corresponds to a Latin square. Moreover, the fact that all the cards are different means that the two Latin squares are mutually orthogonal \cite{DK}. It is known that there are 6912 ordered pairs of mutually orthogonal Latin squares of order 4, as can be seen in sequence A072377. We can use symmetries and assume that the first square has the first column and row in order. This reduces that number of pairs by $24 \cdot 6 = 144$. We can also assume, using relabeling of the numbers, that the second square has the first row in order. Thus, the number of ordered pairs of mutually orthogonal Latin squares up to symmetries is 2.

In a way, a Latin quad square is a generalization of mutually orthogonal Latin squares. Indeed, as we see in Table~\ref{table:possiblels}, to get a Latin quad square, we do not need two squares corresponding to two different attributes to be orthogonal.

It is also well-known that the maximum number of mutually orthogonal Latin squares of order 4 is 3. We can see an example of such squares in Table~\ref{table:mols}. We can build a Latin quad square using the original deck and each square in the table as a value for one attribute. Moreover, we can pick 16 cards from the standard deck, fixing the value of one attribute. We can use any two of these squares to build a Latin square out of these 16 cards.
\begin{table}[ht!]
\begin{center}
\begin{tabular}{|c|c|c|c|}
\hline
0 & 1 & 2 & 3 \\ \hline
1 & 0 & 3 & 2 \\ \hline
2 & 3 & 0 & 1 \\ \hline
3 & 2 & 1 & 0 \\ \hline
\end{tabular} \indent
\quad
\begin{tabular} {|c|c|c|c|}
\hline
0 & 1 & 2 & 3 \\ \hline
2 & 3 & 0 & 1 \\ \hline
3 & 2 & 1 & 0 \\ \hline
1 & 0 & 3 & 2 \\ \hline
\end{tabular} \indent
\quad
\begin{tabular} {|c|c|c|c|}
\hline
0 & 1 & 2 & 3 \\ \hline
3 & 2 & 1 & 0 \\ \hline
1 & 0 & 3 & 2 \\ \hline
2 & 3 & 0 & 1 \\ \hline
\end{tabular}
\end{center}
\caption{Three mutually orthogonal Latins squares.}
\label{table:mols}
\end{table}

\subsection{Magic and semimagic quad squares}

A square array of numbers is called a \textit{magic square} if the sums of the numbers in each row, column, and diagonal are the same. A square array of numbers is called a \textit{semimagic square} if the sums of the numbers in each row and each column are the same. In other words, the condition on the diagonals is dropped.

All rotations and reflections of a square keep the property of the square to be magic or semimagic. Semimagic squares have some additional symmetries. For example, we can shuffle rows and columns. Thus, we can assume that the first row and the first column are in non-decreasing order. Moreover, if the first row has the same value for two columns, we continue shuffling the columns and assume that the second row for a given value in the first row is in non-decreasing order. 

The notion of a magic square is known for the game of SET. For such a game, it is defined as a 3-by-3 square made out of SET cards where each row, column, and diagonal forms a SET.

There is a natural way to generalize it to EvenQuads. We define a \textit{magic quad square} as a 4-by-4 square of EvenQuads cards such that each row, column, and diagonal forms a quad. Equivalently, for a chosen attribute, the values in each row, column, and diagonal are all different, all the same, or half and half. We define a \textit{semimagic quad square} as a 4-by-4 square of cards such that each row and column forms a quad. 

Here is an example of a semimagic but not magic quad square restricted to one attribute:
\begin{center}
\begin{tabular}{|c|c|c|c|}
\hline
$0$ & $1$ & $2$ & $3$ \\ \hline
$2$ & $3$ & $0$ & $1$ \\ \hline
$3$ & $0$ & $1$ & $2$ \\ \hline
$1$ & $2$ & $3$ & $0$ \\ \hline
\end{tabular} .
\end{center}

We can describe the magic and semimagic quad squares not only in terms of cards but also in terms of numbers. A magic quad square is a 4-by-4 grid of numbers such that each bitwise XOR of rows, columns, and diagonals is zero. We can define a semimagic quad square in terms of numbers similarly.

\subsection{Strongly magic quad squares}

In the game of SET, the magic squares have the following property. By definition, each magic SET square contains eight sets: in each row, column, and diagonal. It always contains four more sets. Any three cards in a magic square, such that the values for each coordinate are either all the same or all different, form a set. Equivalently, if the coordinates of three cards form a set, then the cards themselves form a set. We use this idea in the next definition.

We define a \textit{strongly magic quad square} to be a square such that for any four cards, if their $x$-coordinates are all the same, half and half, or all different, and their $y$-coordinates are all the same, half and half, or all different, then the four cards form a quad. Equivalently, if the coordinates of four cards form a quad, then the cards form a quad.

In terms of SET, a strongly magic SET square is the same as a magic SET square. In the game of quads, this property breaks. A strongly magic quad square is a magic quad square and, therefore, a semimagic quad square. However, a semimagic or magic square does not have to be a strongly magic square. Here is an example of a magic square that is not a strongly magic square, where the bolded values do not form a quad as needed: 
\begin{center}
\begin{tabular}{|c|c|c|c|}
\hline
     \textbf{000}& 010 & 020 & 030 \\
\hline 
     003 & 011 & 023 & \textbf{031} \\
\hline
     002 & \textbf{012} & 022 & 032 \\
\hline 
     001 & 013 & \textbf{021} & 033 \\
\hline 
\end{tabular} .
\end{center}

We can view the 16 pairs of coordinates for locations of different cards in a strongly magic quad square as a Quad-16 deck. Any three cards in a Quad-16 deck can be completed to a quad. Thus, for any three cards in a strongly magic quad square, there is a card that completes the three to a quad. We just need to take the coordinates of the three given cards, and for the fourth card, pick a card whose coordinates complete the three given sets of coordinates into a quad.

Each row of a strongly magic quad square is a quad and, therefore, a plane in our space. Different rows correspond to parallel planes. We show this by the following argument. Let the cards in the first row from left to right be $a_1$, $a_2$, $a_3$, and $a_4$, the cards in the second row from left to right be $b_1$, $b_2$, $b_3$, and $b_4$, etc. We assume that $b_1=a_1+d$. Let $i$ be an index 2, 3, or 4. Then, the cards $a_1$, $a_i$, $b_1$, and $b_i$ form a quad by definition of a strongly magic quad square. Thus, $a_1+a_i+b_1+b_i = 0$. Replacing $b_1$ with $a_1 + d$, we get $a_1+a_i+a_1+d+b_i =  a_i + d +b_i = 0$. Thus, $b_i = a_i + d$. Thus, the second row is the translation of the first row by vector $d$. Similar statements are true for other rows. Therefore, the rows form parallel planes. An analogous statement is true for columns.

We can define strongly magic quad squares in terms of numbers, rather than cards, in the following way. A 4-by-4 grid of numbers forms a strongly magic quad square where four numbers such that four coordinates in each direction bitwise XOR to zero have to bitwise XOR to zero themselves.

\subsection{Calculating the number of magic, semimagic, and strongly magic quad squares}

Consider a special example of a strongly magic quad square with cards from the EvenQuads-16 deck:
\begin{center}
\begin{tabular}{|c|c|c|c|}
\hline
0 & 1 & 2 & 3 \\ \hline
4 & 5 & 6 & 7 \\ \hline
8 & 9 & 10 & 11 \\ \hline
12 & 13 & 14 & 15 \\ \hline
\end{tabular} .
\end{center}

We call a quad square \textit{type C} if the first row is 0, 1, 2, 3 and the first column is 0, 4, 8, 12. 

Squares of type C play an important role in calculating the total number of quad squares. As mentioned earlier, we can view our EvenQuads-$2^n$ deck as a vector space $\mathbb{Z}^n$. As quads correspond to planes, we can view affine transformations of $\mathbb{Z_2}^n$ as a symmetry group of the deck. By using affine transformations, we can reduce any quads square to type C.

\begin{lemma}
\label{lemma:symmetrymultiplier}
The number of semimagic, magic, and strongly magic quad squares using the EvenQuads-$2^n$ deck is $2^n(2^n-1)(2^n-2)(2^n-4)(2^n-8)$ times the number of semimagic, magic, and strongly magic quad squares of type C in the same deck.
\end{lemma}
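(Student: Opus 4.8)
The plan is to realize the stated multiplier as the size of a ``coordinate frame'' factor, by letting the affine group of $\mathbb{Z}_2^n$ act on the set of squares of each given type. First I would record that every affine map $\phi(\vec{x}) = A\vec{x} + \vec{b}$ with $A \in \mathrm{GL}_n(\mathbb{Z}_2)$ sends quad squares to quad squares of the same kind: since $\phi(\vec{a})+\phi(\vec{b})+\phi(\vec{c})+\phi(\vec{d}) = A(\vec{a}+\vec{b}+\vec{c}+\vec{d})$ (the four copies of $\vec{b}$ cancel in characteristic $2$), a four-tuple is a quad if and only if its image is, so rows, columns, and diagonals are preserved; bijectivity of $\phi$ keeps the $16$ cards distinct; and the strongly magic condition is unaffected because it only constrains cards whose \emph{positions} form a quad. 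Hence $\phi$ permutes semimagic squares among themselves, and likewise magic and strongly magic squares.

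Next I would set up the reduction to type C as a bijection. Label the relevant entries $a_1,a_2,a_3,a_4$ (first row) and $b_1,c_1,d_1$ (rest of first column), and form the four direction vectors $u_1=a_2\oplus a_1$, $u_2=a_3\oplus a_1$, $u_3=b_1\oplus a_1$, $u_4=c_1\oplus a_1$. Assuming these are linearly independent (the key point, discussed below), there is an affine map carrying $a_1,a_2,a_3,b_1,c_1$ to $0,1,2,4,8$; since the remaining corners are forced by the quad relations ($a_4=a_1\oplus a_2\oplus a_3$ and $d_1=a_1\oplus b_1\oplus c_1$), this map produces first row $0,1,2,3$ and first column $0,4,8,12$, i.e.\ a type C square. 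I would then exhibit an explicit bijection between all squares of a given type and pairs consisting of a type C square $T$ of that type together with a frame $(p_0,v_1,v_2,v_3,v_4)$, where $p_0\in\mathbb{Z}_2^n$ and $v_1,v_2,v_3,v_4$ are linearly independent: the frame determines an affine map $\psi$ with $\psi(0)=p_0$ and $\psi(e_i)=p_0+v_i$, and $S=\psi(T)$. This map is well defined because the $16$ cards of $T$ lie in $\mathrm{span}(e_1,e_2,e_3,e_4)$, so $\psi(T)$ depends only on $(p_0,v_1,\dots,v_4)$ and not on how $A$ is extended; it is injective because $p_0$ and the $v_i$ are read off from the top-left corner and the four directions of $S$; and it is surjective by the reduction just described.

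Counting then yields the formula: there are $2^n$ choices of $p_0$, and for the independent quadruple $v_1,v_2,v_3,v_4$ each new vector must lie outside the span of those already chosen, where these spans have sizes $1,2,4,8$, giving $(2^n-1)(2^n-2)(2^n-4)(2^n-8)$ ordered choices. Thus the frame factor is exactly $2^n(2^n-1)(2^n-2)(2^n-4)(2^n-8)$, which multiplies the number of type C squares of each type.

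The step I expect to be the main obstacle---and the only place where the square structure, rather than pure affine algebra, is used---is establishing that $u_1,u_2,u_3,u_4$ are linearly independent. I would prove this using only that the $16$ cards are distinct and that the first row and first column are quads. Pairwise independence within $\{u_1,u_2\}$ and within $\{u_3,u_4\}$ is immediate from distinctness. For a mixed dependence, I would note that the nonzero elements of $\mathrm{span}(u_1,u_2)$ are exactly $\{a_i\oplus a_1 : i=2,3,4\}$ and those of $\mathrm{span}(u_3,u_4)$ are exactly $\{b_1\oplus a_1,\, c_1\oplus a_1,\, d_1\oplus a_1\}$ (using the two quad relations). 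Any dependence involving both blocks would force a common nonzero vector $v$ in these two sets, hence an equality $a_i = (\text{a first-column card})$, contradicting the disjointness of the first row and first column. Thus the two direction planes meet only in $0$, the four vectors are independent, and the reduction to type C is valid. The argument applies verbatim to magic and strongly magic squares, as they are special cases.
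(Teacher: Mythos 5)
Your overall strategy---normalize the first row and column to $0,1,2,3$ and $0,4,8,12$ using the affine group of $\mathbb{Z}_2^n$, prove that the four direction vectors read off a square's first row and column are linearly independent, and count frames---is the same argument the paper makes, just formalized; the paper does it informally by fixing $a=0$, $b=1$, $c=2$, $d=4$, $e=8$ one step at a time and multiplying the number of ``equivalent'' choices, and your independence lemma is exactly the content it glosses over. However, there is one genuine error in your write-up: the claim that the map $(T,\mathrm{frame})\mapsto\psi(T)$ is well defined ``because the $16$ cards of $T$ lie in $\mathrm{span}(e_1,e_2,e_3,e_4)$.'' That claim is false for semimagic and for magic quad squares, which are two of the three kinds the lemma covers. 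A type C square is only required to have its \emph{first row and column} equal to $0,1,2,3$ and $0,4,8,12$; the remaining nine entries can be arbitrary cards. The paper itself exhibits a semimagic type C square containing the entries $17, 32, 53, 21, 36, 61$ (the square decomposed as $A+B$ in Section~\ref{sec:numbersmsquares}), and for magic squares the terms $85(2^n-16)+\cdots$ in Lemma~\ref{lemma:typeCmagic} count precisely the type C squares with entries outside $\{0,\dots,15\}$. Your claim is true only for strongly magic squares, where every entry is forced to be the XOR of its first-row and first-column entries. Consequently, for $n>4$ the affine map $\psi$ with $\psi(0)=p_0$, $\psi(e_i)=p_0+v_i$ is not unique, different extensions of its linear part give genuinely different images $\psi(T)$, and your pair-to-square map is multivalued as stated.

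The repair is short and leaves the rest of your proof intact: for each frame $f=(p_0,v_1,\dots,v_4)$ fix once and for all a single affine bijection $\psi_f$ taking $0\mapsto p_0$ and $e_i\mapsto p_0+v_i$, and define the correspondence by $(T,f)\mapsto\psi_f(T)$. Your injectivity argument (the frame is read off $S$, and then $T=\psi_f^{-1}(S)$ is determined) and your surjectivity argument (independence of $u_1,\dots,u_4$, plus the fact that affine bijections preserve each of the three kinds of square in both directions, so $\psi_f^{-1}(S)$ is a type C square of the same kind) then go through verbatim, and the frame count $2^n(2^n-1)(2^n-2)(2^n-4)(2^n-8)$ yields the lemma. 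Alternatively, you can avoid making choices entirely by fiber counting: the map sending a square to its frame is equivariant for the affine group, which acts transitively on frames, so all fibers have the same size, and the fiber over the standard frame $(0,1,2,4,8)$ is exactly the set of type C squares.
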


\begin{proof}
Consider the first row of a quad square consisting of cards $a$, $b$, $c$, and $a+b+c$. Since all cards are equivalent, we may suppose $a=0$, count all the quad squares with $a=0$, and then multiply by $2^n$ to get the total number. All remaining cards are equivalent; hence, we can let $b=1$ and add a factor of $2^n-1$ for future multiplication. All remaining cards are still equivalent; hence we can let $c=2$, adding a factor of $2^n-2$. Then the last card in the first row is forced, and we proceed with the first column:
\begin{center}
\begin{tabular}{|c|c|c|c|}
\hline
$a$&$b$&$c$&$a+b+c$\\\hline
$d$& ? & ? & ? \\\hline
$e$& ? & ? & ? \\\hline
$a+d+e$& ? & ? & ? \\
\hline
\end{tabular} .
\end{center}

Now $d$ cannot be 0, 1, 2, or 3, and all numbers other than 0, 1, 2, and 3 are equivalent. Hence we can let $d=4$, adding a factor of $2^n-4$ for future multiplication. Now $e$ cannot be any number from 0 to 7 since either $e$ will be from 0 to 4 or $a+d+e$ will be from 0 to 4, leading to a repeated value. All numbers not 0 through 7 are equivalent; hence we may let $d=8$ and add a factor of $2^n-8$. Then the last element in the first column has to be $a+d+e=12$.

At this point, our generic quad square looks as follows:
\begin{center}
\begin{tabular}{|c|c|c|c|}
\hline
0&1&2&3\\\hline
4& ? & ? & ? \\\hline
8& ? & ? & ?\\\hline
12& ? & ? & ? \\ \hline
\end{tabular} '
\end{center}
and the total number of quad squares is $2^n(2^n-1)(2^n-2)(2^n-4)(2^n-8)$ times the number of quad squares of this form.
\end{proof}

\section{Types of Semimagic Quad Squares}
\label{sec:typesofsmsquares}

We classify all possible types of semimagic quad squares according to the types of rows and columns. As we can shuffle rows and columns, we can arrange types of rows in lexicographic order. There are 15 possible sets of 4 types: DDDD, DDDH, DDDS, DDHH, DDHS, DDSS, DHHH, DHHS, DHSS, DSSS, HHHH, HHHS, HHSS, HSSS, and SSSS. Not all of these types are realizable. For example, if the first three rows are S rows, the three first values in each column are the same. They uniquely define the fourth value. Thus, the fourth row has to be S too. Similarly, if the first two rows are S, the two values in the third and fourth rows of each column have to be the same. Thus, the other two rows have to be the same type.

We can look at how values are distributed within types. For example, DDDD has four of each value; we can write this as 4,4,4,4. Type DDDS has three instances of three values and seven instances of the other value, which we can write as 3,3,3,7. Types H and S contribute an even number of each value. Type D contributes an odd number of each value. Thus all numbers in the distribution are the same parity, depending on whether the number of D rows/columns is even or odd. Thus, we can call each distribution either \textit{even} or \textit{odd}.

Table~\ref{table:dist} shows possible value distributions for each type of quad combination in a semimagic quad square. A value distribution or a type is not included if there is no semimagic quad square of this type. The data in Table~\ref{table:dist} is divided into two tables corresponding to even and odd distributions. We mark the possible distributions as Yes.

\begin{table}[ht!]
\begin{center}
\begin{tabular}{|c|c|c|c|c|c|c|c|c|}
\hline
Even     & DDDD      & DDHH    & DDHS    & DDSS     & HHHH    & HHHS     & HHSS        & SSSS                        \\ \hline
0,0,0,16 &           &         &         &          &         &          &             & Yes \\ \hline
0,0,4,12 &           &         &         &          &         &          & Yes         &                             \\ \hline
0,0,6,10 &           &         &         &          &         & Yes      &             &  \\ \hline
0,0,8,8  &           &         &         &          & Yes     &          & Yes         &  Yes                           \\ \hline
0,4,4,8  &           &         &         &          & Yes     & Yes      & Yes         &                             \\ \hline
0,4,6,6  &           &         &         &          & Yes     &          &             &                             \\ \hline
2,2,2,10 &           &         &         & Yes      &         & Yes      &             &                             \\ \hline
2,2,4,8  &           &         & Yes     &          & Yes     &          &             &                             \\ \hline
2,2,6,6  &           & Yes     &         & Yes      & Yes     & Yes      & Yes         &                             \\ \hline
2,4,4,6  &           & Yes     & Yes     &          & Yes     & Yes      &             &                             \\ \hline
4,4,4,4  & Yes       & Yes     &         &          & Yes     & Yes      & Yes         & Yes \\ \hline
\end{tabular}
\end{center}
\begin{center}
\begin{tabular}{|c|c|c|c|c|}
\hline
Even     & DDDH                        & DDDS                        & DHHH                        & DHHS               \\ \hline
1,3,3,9  &                             &                             &                             & Yes 		 \\ \hline
1,3,5,7  &                             &                             & Yes 			       & Yes              \\ \hline
1,5,5,5  &                             &                             & Yes                         &                 \\ \hline
3,3,3,7  &                             & Yes                         & Yes                         &                  \\ \hline
3,3,5,5  & Yes                         &                             & Yes                         & Yes                \\ \hline
\end{tabular}
\end{center}
    \caption{Possible value distribution for even and odd combinations.}
    \label{table:dist}
\end{table}

For two type combinations to be the combinations for the rows and columns of a semimagic quad square, they must share at least one value distribution in the list above. However, sharing the same value distribution does not mean that two combinations are a valid pair. For example, DDSS and HHHS both have 2,2,6,6 as a possible value distribution. However, when paired, the two S rows must have the same values because of the S column, which totals to at least 8 for one value. So 2,2,6,6 is impossible for DDSS and HHHS as a pair, even though they are valid distributions for DDSS and HHHS separately. Using the above list, we went through all the pairs that shared value distributions, checked to see if the corresponding square exists, and put the existing combinations into Table~\ref{table:pairs}.

\begin{table}[ht!]
\begin{center}
\begin{tabular}{|c|c|c|c|c|c|c|c|c|}
\hline
EVEN & DDDD        & DDHH                   & DDHS       & DDSS       & HHHH               & HHHS     & HHSS            & SSSS     \\ \hline
DDDD & 4,4,4,4     & 4,4,4,4                &            &            & 4,4,4,4            & 4,4,4,4  & 4,4,4,4          & 4,4,4,4  \\ \hline
DDHH &  & \begin{tabular}[c]{@{}c@{}}2,2,6,6\\ 2,4,4,6\\ 4,4,4,4\end{tabular} & 2,4,4,6                  & 2,2,6,6                  & \begin{tabular}[c]{@{}c@{}}2,2,6,6\\ 2,4,4,6\\ 4,4,4,4\end{tabular}           & \begin{tabular}[c]{@{}c@{}}2,2,6,6\\ 2,4,4,6\end{tabular} & 2,2,6,6         &          \\ \hline
DDHS &  &  & 2,2,4,8                  &     & 2,2,4,8        &                 &                  &          \\ \hline
DDSS &  &  &  &     &                & 2,2,2,10        &                  &          \\ \hline
HHHH &  &  &  &  & \begin{tabular}[c]{@{}c@{}}0,0,8,8\\ 0,4,4,8\\ 0,4,6,6\\ 4,4,4,4\end{tabular} & 0,4,4,8         & \begin{tabular}[c]{@{}c@{}}0,0,8,8\\ 0,4,4,8\end{tabular} & 0,0,8,8  \\ \hline
HHHS &  & &  &  &            & 0,0,6,10        &                &          \\ \hline
HHSS &  &  &  &  &           &            & 0,0,4,12         &          \\ \hline
SSSS &  & & &  &                & &           & 0,0,0,16 \\ \hline
\end{tabular}
\end{center}
\begin{center}
\begin{tabular}{|c|c|c|c|c|}
\hline
ODD  & DDDH                     & DDDS                     & DHHH           & DHHS                   \\ \hline
DDDH & 3,3,5,5                  &     & 3,3,5,5        & 3,3,5,5                  \\ \hline
DDDS & & 3,3,3,7                  & 3,3,3,7        &        \\ \hline
DHHH &  & & \begin{tabular}[c]{@{}c@{}}1,3,5,7\\ 1,5,5,5\\ 3,3,3,7\\ 3,3,5,5\end{tabular} & 1,3,5,7         \\ \hline
DHHS &  &  &                                & 1,3,3,9                   \\ \hline
\end{tabular}
\end{center}
    \caption{Type pairs and their possible value distributions for even and odd combinations.}
    \label{table:pairs}
\end{table}

In the next subsection, we study a particular case of an EvenQuads-16 deck.

\subsection{EvenQuads-16 deck}

Let one of the three attributes, say color, be the attribute that is the same across all of the cards of an EvenQuads-64 deck. Suppose this color is red. There are 16 red colors in the deck. So all of them participate in the square. This is equivalent to studying the EvenQuads-16 deck and also a grid with numbers from 0 to 15.

The semimagic quad squares of the two attributes other than color must have 4 of each value since all of the red cards participate. So the value distribution of the quad combinations for the rows and columns must be 4,4,4,4. Thus, we can only have DDDD, DDHH, HHHH, HHHS, HHSS, or SSSS types for rows or columns. This creates 36 pairs of types. As we can swap rows and columns, we can reduce this number to 21.

\begin{proposition}
If there exists an S column, then all rows are types D. 
\end{proposition}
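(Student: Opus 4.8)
The plan is to fix one of the two free attributes and track only the values of that attribute across the grid. Write $v_{ij}\in\{0,1,2,3\}$ for the value of the chosen attribute on the card in row $i$, column $j$. The one global fact I need is the value distribution: since the square uses all $16$ cards of the EvenQuads-16 deck exactly once, and the chosen attribute is one of the two free attributes, each value $0,1,2,3$ occurs exactly four times in the grid. This is the $4,4,4,4$ distribution already established above, and everything else follows from counting occurrences of a single value.

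Next I would unpack the hypothesis. Suppose column $j_0$ has type $S$, so its four entries all equal a common value $s$. That column alone accounts for four occurrences of $s$, and because $s$ occurs exactly four times in the whole grid, $s$ appears in no other column. In particular, each row contains $s$ \emph{exactly once}, namely in position $j_0$.

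Then I would argue row by row. Fix a row $i$. Its four entries sum to $\vec{0}$ in $\mathbb{Z}_2^2$ (equivalently, they bitwise XOR to zero), so, as noted when the types were introduced, the row is of type $D$, $H$, or $S$. In a type-$S$ row every entry equals one value, and in a type-$H$ row each value appears with even multiplicity (twice or not at all); both force the multiplicity of $s$ within the row to be even. But $s$ occurs in row $i$ exactly once, an odd multiplicity, so the row can be neither $S$ nor $H$. Hence it is type $D$. Since $i$ was arbitrary, all four rows are type $D$, which is the claim.

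There is no serious obstacle here: the argument rests entirely on multiplicity bookkeeping enabled by the $4,4,4,4$ distribution. The only point worth stating carefully is that types $H$ and $S$ both require every value to appear an even number of times in a line, so a value that appears exactly once can only occur in a type-$D$ line; once this parity observation is made explicit, the conclusion is immediate and uses nothing about the second attribute.
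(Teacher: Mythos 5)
Your proof is correct and takes essentially the same approach as the paper: the S column exhausts all four occurrences of its value (by the $4,4,4,4$ distribution), so each row contains that value exactly once, which is incompatible with the even multiplicities forced by types H and S. The paper compresses this into one line; your version just makes the parity bookkeeping explicit.
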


\begin{proof}
If there is an S column, say 0000. Then all 0s are used, and we cannot have S or H rows.
\end{proof}

It follows that cases DDHH/HHHS, DDHH/HHSS, DDHH/SSSS, HHHH/HHHS, HHHH/HHSS, HHHH/SSSS, HHHS/HHHS, HHHS/HHSS, HHHS/SSSS, HHSS/HHSS, HHSS/SSSS, and SSSS/SSSS are impossible. This leaves the following 9 cases: DDDD/DDDD, DDDD/DDHH, DDDD/HHHH, DDDD/HHHS, DDDD/HHSS, DDDD/SSSS, DDHH/DDHH, DDHH/HHHH, and HHHH/HHHH. We study these examples below and provide the total number of possible squares up to symmetries: relabeling the values, shuffling rows and columns, and reflections of the square. We study these cases for one attribute only.

\textbf{Case DDDD/DDDD, 4 squares.} If all rows and all columns are D, then we have a Latin square. It is well-known that up to symmetries, there are four different ones \cite{Riordan}:

\begin{center}
\begin{tabular}{|c|c|c|c|}
\hline
0 & 1 & 2 & 3 \\ \hline
1 & 2 & 3 & 0 \\ \hline
2 & 3 & 0 & 1 \\ \hline
3 & 0 & 1 & 2 \\ \hline
\end{tabular}
\quad
\begin{tabular}{|c|c|c|c|}
\hline
0 & 1 & 2 & 3 \\ \hline
1 & 3 & 0 & 2 \\ \hline
2 & 0 & 3 & 1 \\ \hline
3 & 2 & 1 & 0 \\ \hline
\end{tabular}
\quad
\begin{tabular}{|c|c|c|c|}
\hline
0 & 1 & 2 & 3 \\ \hline
1 & 0 & 3 & 2 \\ \hline
2 & 3 & 1 & 0 \\ \hline
3 & 2 & 0 & 1 \\ \hline
\end{tabular}
\quad
\begin{tabular}{|c|c|c|c|}
\hline
0 & 1 & 2 & 3 \\ \hline
1 & 0 & 3 & 2 \\ \hline
2 & 3 & 0 & 1 \\ \hline
3 & 2 & 1 & 0 \\ \hline
\end{tabular} .
\end{center}

\textbf{Case DDDD/DDHH, 2 squares.} Suppose all rows are D. Without loss of generality, we can assume that the first row is 0123 and the first column is 0011 (type H):
\begin{center}
\begin{tabular}{|c|c|c|c|}
\hline
0 & 1 & 2 & 3 \\ \hline
0 & ? & ? & ? \\ \hline
1 & ? & ? & ? \\ \hline
1 & ? & ? & ? \\ \hline
\end{tabular} .
\end{center}
We need to put one more 1, and it has to go in the second row. It cannot be in the second column, as this would mean that the second column is the second H column, and the other 2 columns would be D. If that were the case, however, that would mean that there would be a fifth and sixth 1 in the last 2 columns, which is impossible. Without loss of generality, we can assume that 1 is in the third column. That means the second row is either 0213 or 0312. It follows that the fourth column must be the other H column, as the second and third both have 1s, which we cannot have more of. Moreover, the fourth column must have two 2s and two 3s. As we can still swap the last two rows, we can assume that the third row starts as 10. Thus we have the following two cases:
\begin{center}
\begin{tabular}{|c|c|c|c|}
\hline
0 & 1 & 2 & 3 \\ \hline
0 & 3 & 1 & 2 \\ \hline
1 & 0 & 3 & 2 \\ \hline
1 & 2 & 0 & 3 \\ \hline
\end{tabular}
\quad
\begin{tabular}{|c|c|c|c|}
\hline
0 & 1 & 2 & 3 \\ \hline
0 & 2 & 1 & 3 \\ \hline
1 & 0 & 3 & 2 \\ \hline
1 & 3 & 0 & 2 \\ \hline
\end{tabular} .
\end{center}

\textbf{Case DDDD/HHHH, 3 squares.} Suppose all rows are D and all columns are H. Without loss of generality, we can assume that the first column is 0011 and the first row is 0123. Moreover, the second column has to contain 1, and this 1 can only go in row 2. Thus, we have the following arrangement. Keep in mind that we can still flip the last two rows:
\begin{center}
\begin{tabular}{|c|c|c|c|}
\hline
0 & 1 & 2 & 3 \\ \hline
0 & 1 & ? & ? \\ \hline
1 & ? & ? & ? \\ \hline
1 & ? & ? & ? \\ \hline
\end{tabular} .
\end{center}

We have two cases for the second row: 0123 and 0132.

Case 1. Suppose the second row is 0123. In this case, the third and the fourth row must be the same. We can list options: 1032, 1230, 1302. Given that we can swap 2 and 3, we actually have two cases:
\begin{center}
\begin{tabular}{ |c|c|c|c| } 
 \hline
 0 & 1 & 2 & 3 \\ 
 \hline
 0 & 1 & 2 & 3 \\ 
 \hline
 1 & 0 & 3 & 2 \\ 
 \hline
 1 & 0 & 3 & 2 \\
 \hline
\end{tabular}
\quad
\begin{tabular}{ |c|c|c|c| } 
 \hline
 0 & 1 & 2 & 3 \\ 
 \hline
 0 & 1 & 2 & 3 \\ 
 \hline
 1 & 2 & 3 & 0 \\ 
 \hline
 1 & 2 & 3 & 0 \\
 \hline
\end{tabular} .
\end{center}

Case 2. Suppose the second row is 0132.
The last two columns will use all threes and twos. Thus, the second column has to be 1100. Without loss of generality, we have one case (as we can swap the last two rows):
\begin{center}
\begin{tabular}{ |c|c|c|c| } 
 \hline
 0 & 1 & 2 & 3 \\ 
 \hline
 0 & 1 & 3 & 2 \\ 
 \hline
 1 & 0 & 2 & 3 \\ 
 \hline
 1 & 0 & 3 & 2 \\
 \hline
\end{tabular} .
\end{center}

\textbf{Case DDDD/HHHS, 1 square.} Without loss of generality, let the first row be 0123, and the first column be S and 0000. There cannot be any more 0s, so the remaining values for the HHH columns are four 1s, 2s, and 3s. The only ways they can be split into three H columns is (1122, 2233, 1133) or (1133, 1122, 2233); however, (1133, 1122, 2233) can be achieved by switching the 2s and 3s in (1122, 2233, 1133), so they are indistinguishable. Without loss of generality, we assume that the values of the second column are in non-decreasing order, 1122. The 2s in the third column cannot be in the same rows as the 2s in the second column, so the third column is determined as 2233, followed by the fourth column, which is determined as 3311. Therefore the following square is the only square for this case:
\begin{center}
\begin{tabular}{|c|c|c|c|}
\hline
0 & 1 & 2 & 3 \\ \hline
0 & 1 & 2 & 3 \\ \hline
0 & 2 & 3 & 1 \\ \hline
0 & 2 & 3 & 1 \\ \hline
\end{tabular} .
\end{center}

\textbf{Case DDDD/HHSS, 1 square.} Without loss of generality, we can assume that the first row is 0123, and the first column is H and 0011. The second column is forced to be H because the first column already uses two 1s. Thus, the last two columns are both S:
\begin{center}
\begin{tabular}{|c|c|c|c|}
\hline
     0& 1 & 2 & 3 \\
\hline 
     0& 1 & 2 & 3 \\
\hline
     1 & 0 & 2 & 3 \\
\hline 
     1 & 0 & 2 & 3 \\
\hline 
\end{tabular} .
\end{center}

\textbf{Case DDDD/SSSS, 1 case.} If all rows are D and all columns are S, then we have one case:
\begin{center}
\begin{tabular}{|c|c|c|c|}
\hline
0 & 1 & 2 & 3 \\ \hline
0 & 1 & 2 & 3 \\ \hline
0 & 1 & 2 & 3 \\ \hline
0 & 1 & 2 & 3 \\ \hline
\end{tabular} .
\end{center}

\textbf{Case DDHH/DDHH, 3 squares.} Consider the intersection of two D rows and two D columns. We show that each value appears there no more than once. Without loss of generality, suppose a value 0 appears twice. Then it has to appear two more times in an H row and two more times in an H column, which is a contradiction.

Without loss of generality, the first row is 0123, and the first column is 0123. Let the types for the columns be DDHH. By the argument above, the second row cannot be type D, as the top left 2-by-2 box contains two 1s. Without loss of generality, we can assume that the third row is type D. From the argument above, we know that the value of the third row, second column must be 3. Thus, the third row has to be either 2301 or 2310. In any case, we can place the fourth 1 uniquely. We got the following cases:
\begin{center}
\begin{tabular}{|c|c|c|c|}
\hline
0 & 1 & 2 & 3 \\ \hline
1 & ? & ? & 1 \\ \hline
2 & 3 & 0 & 1 \\ \hline
3 & ? & ? & 3 \\ \hline
\end{tabular}
\quad
\begin{tabular}{|c|c|c|c|}
\hline
0 & 1 & 2 & 3 \\ \hline
1 & ? & 1 & ? \\ \hline
2 & 3 & 1 & 0 \\ \hline
3 & ? & 2 & ? \\ \hline
\end{tabular} .
\end{center}

The two remaining 0s must be placed in the same row because both remaining rows are type H. In the second case, the two 0s can only be placed in the second row since the values in the fourth row are already different, and there cannot be 3 different values in a row of type H. We find that placing the two 0s in the second row works. In the first case, the 0s can be put in either row, and both possibilities work. So in total, for DDHH/DDHH, there are 3 distinct squares that are shown below:
\begin{center}
\begin{tabular}{|c|c|c|c|}
\hline
0 & 1 & 2 & 3 \\ \hline
1 & 0 & 0 & 1 \\ \hline
2 & 3 & 0 & 1 \\ \hline
3 & 2 & 2 & 3 \\ \hline
\end{tabular}
\quad
\begin{tabular}{|c|c|c|c|}
\hline
0 & 1 & 2 & 3 \\ \hline
1 & 2 & 2 & 1 \\ \hline
2 & 3 & 0 & 1 \\ \hline
3 & 0 & 0 & 3 \\ \hline
\end{tabular}
\quad
\begin{tabular}{|c|c|c|c|}
\hline
0 & 1 & 2 & 3 \\ \hline
1 & 0 & 1 & 0 \\ \hline
2 & 3 & 1 & 0 \\ \hline
3 & 2 & 2 & 3 \\ \hline
\end{tabular} .
\end{center}

\textbf{Case DDHH/HHHH, 2 squares.} Without loss of generality, we can assume that the first row is D and 0123 and the first column is 0011. The second row cannot be D, as otherwise, 1 would already be used four times, and the other H row would be impossible. So, if the second row is H, without loss of generality, we can assume that it contains two 0s and two 2s. Moreover, the second row cannot be 0202, as this case makes 2 appear in three different columns, which is impossible. Thus, we have two cases:
\begin{center}
\begin{tabular}{|c|c|c|c|}
\hline
     0& 1 & 2 & 3 \\
\hline 
     0& 0 & 2 & 2 \\
\hline
     1 & ? & ? & ? \\
\hline 
     1 & ? & ? & ? \\
\hline 
\end{tabular}
\quad
\begin{tabular}{|c|c|c|c|}
\hline
     0& 1 & 2 & 3 \\
\hline 
     0& 2 & 2 & 0 \\
\hline
     1 & ? & ? & ? \\
\hline 
     1 & ? & ? & ? \\
\hline 
\end{tabular} .
\end{center}

Without loss of generality, we can assume that the last two values in the second column are in increasing order. After that, we can finish the two cases uniquely:
\begin{center}
\begin{tabular}{|c|c|c|c|}
\hline
     0& 1 & 2 & 3 \\
\hline 
     0& 0 & 2 & 2 \\
\hline
     1 & 0 & 3 & 2 \\
\hline 
     1 & 1 & 3 & 3 \\
\hline 
\end{tabular}
\quad
\begin{tabular}{|c|c|c|c|}
\hline
     0& 1 & 2 & 3 \\
\hline 
     0& 2 & 2 & 0 \\
\hline
     1 & 1 & 3 & 3 \\
\hline 
     1 & 2 & 3 & 0 \\
\hline 
\end{tabular} .
\end{center}

\textbf{Case HHHH/HHHH, 3 squares.} Without loss of generality, we can assume that the first row is 0011. The other two 1s must lie in the third and fourth columns. Similarly, the other two 0s must lie in the first and second columns. Without loss of generality, assume that the first column is 0022. Now that we have placed three of the 0s, the last zero must be in the second row and second column:
\begin{center}
\begin{tabular}{|c|c|c|c|}
\hline
0 & 0 & 1 & 1 \\ \hline
0 & 0 & ? & ? \\ \hline
2 & ? & ? & ? \\ \hline
2 & ? & ? & ? \\ \hline
\end{tabular} .
\end{center}

The two missing 1s must be in the same row. Moreover, they need to be in columns 3 and 4. Therefore, we have 2 cases: either the ones are in the second row or, without loss of generality, in the third row:
\begin{center}
\begin{tabular}{|c|c|c|c|}
\hline
0 & 0 & 1 & 1 \\ \hline
0 & 0 & 1 & 1 \\ \hline
2 & ? & ? & ? \\ \hline
2 & ? & ? & ? \\ \hline
\end{tabular}
\quad
\begin{tabular}{|c|c|c|c|}
\hline
0 & 0 & 1 & 1 \\ \hline
0 & 0 & ? & ? \\ \hline
2 & ? & 1 & 1 \\ \hline
2 & ? & ? & ? \\ \hline
\end{tabular} .
\end{center}

For the first case above, there are only two ways, without loss of generality, to fill in the remaining two 2s because they must be in the same column. There is one way for the second case above. The remaining spots must all be 3s, so this will result in three cases:
\begin{center}
\begin{tabular}{|c|c|c|c|}
\hline
0 & 0 & 1 & 1 \\ \hline
0 & 0 & 1 & 1 \\ \hline
2 & 2 & 3 & 3 \\ \hline
2 & 2 & 3 & 3 \\ \hline
\end{tabular}
\quad
\begin{tabular}{|c|c|c|c|}
\hline
0 & 0 & 1 & 1 \\ \hline
0 & 0 & 1 & 1 \\ \hline
2 & 3 & 2 & 3 \\ \hline
2 & 3 & 2 & 3 \\ \hline
\end{tabular}
\quad
\begin{tabular}{|c|c|c|c|}
\hline
0 & 0 & 1 & 1 \\ \hline
0 & 0 & 3 & 3 \\ \hline
2 & 2 & 1 & 1 \\ \hline
2 & 2 & 3 & 3 \\ \hline
\end{tabular} .
\end{center}

We found 20 different semimagic quad squares up to symmetries in the EvenQuads-16 deck.

\section{Strongly Magic Quad Squares}
\label{sec:stronglymagicsquares}

\subsection{The number of strongly magic quad squares}

\begin{theorem}
There are $2^n(2^n-1)(2^n-2)(2^n-4)(2^n-8)$ strongly magic quad squares that can be made by using the cards from the EvenQuads-$2^n$ deck.
\end{theorem}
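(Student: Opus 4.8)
The plan is to reduce to type-C squares via Lemma~\ref{lemma:symmetrymultiplier} and then prove that exactly one strongly magic quad square of type C exists. That lemma asserts that the number of strongly magic quad squares in the EvenQuads-$2^n$ deck equals $2^n(2^n-1)(2^n-2)(2^n-4)(2^n-8)$ times the number of type-C strongly magic quad squares, so the theorem is equivalent to the statement that there is a unique type-C strongly magic quad square.

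For uniqueness, I would index the rows and columns by $\{0,1,2,3\}$ and write $s_{ij}$ for the entry in row $i$ and column $j$. By the definition of type C, the first row gives $s_{0j}=j$ and the first column gives $s_{i0}=4i$, with $s_{00}=0$. For any $i$ and $j$, the four positions $(0,0)$, $(0,j)$, $(i,0)$, $(i,j)$ have row coordinates $0,0,i,i$ and column coordinates $0,j,0,j$; each of these lists bitwise XORs to $0$, so the four positions form a quad in the coordinate (Quad-16) deck. The defining property of a strongly magic quad square then forces the four cards to form a quad, that is, $s_{00}\oplus s_{0j}\oplus s_{i0}\oplus s_{ij}=0$. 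Substituting the known values gives $s_{ij}=4i\oplus j=4i+j$, where the XOR equals a sum because $j$ and $4i$ occupy disjoint bits. Thus every entry is forced, and the only candidate is the square whose $(i,j)$ entry is $4i+j$.

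It remains to check existence, namely that this candidate really is strongly magic. Here I would observe that the map sending the coordinate vector $(i,j)\in\mathbb{Z}_2^4$ to the card $4i+j$ is a linear bijection onto $\{0,1,\dots,15\}\subseteq\mathbb{Z}_2^n$. Because a linear map over $\mathbb{Z}_2$ sends any four vectors whose XOR is zero to four vectors whose XOR is zero, it carries every coordinate quad to a card quad, which is precisely the strongly magic condition. Its sixteen cards $0,1,\dots,15$ are distinct, so the candidate is a genuine type-C strongly magic quad square. Combined with the uniqueness argument, this shows there is exactly one such square, and Lemma~\ref{lemma:symmetrymultiplier} then yields the claimed total.

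The computations here are short, so the only real subtlety, and the step I would treat most carefully, is the bookkeeping that bridges the two decks: verifying that the chosen four positions genuinely form a coordinate quad (so that the strongly magic hypothesis applies to them) and that the resulting linear map is the correct translation of this condition, so that existence, and not merely uniqueness, is established. Everything else is forced.
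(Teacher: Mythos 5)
Your proposal is correct and takes essentially the same approach as the paper: reduce to type-C squares via Lemma~\ref{lemma:symmetrymultiplier}, then argue that the type-C strongly magic quad square is unique, so the count is exactly the lemma's multiplier. The paper compresses this into the single assertion that the first row and column determine the square (with existence implicit in its earlier example square $0,1,\dots,15$), whereas you spell out both the forcing argument $s_{ij}=4i\oplus j$ and the linearity check for existence---details the paper omits but relies on.
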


\begin{proof}
\label{thm:smsnumber}
By Lemma~\ref{lemma:symmetrymultiplier}, we need to calculate the number of strongly magic quad squares of type C. But given the first row and first column, the strongly magic quad square is uniquely defined. Thus, the theorem follows.
\end{proof}

There are $64(64-1)(64-2)(64-4)(64-8)=839946240$ strongly magic quad squares that can be made by using the cards from the EvenQuads deck. For the EvenQuads-$2^n$ deck, where $n \geq 4$, the number of strongly magic quad squares is provided by the following sequence, which is now sequence A362874:
\[322560,\ 19998720,\ 839946240,\ 30478049280,\ 1036253675520,\ 34162943754240,\ \ldots.\]

The first term corresponds to the EvenQuads-16 deck. All such squares use the same 16 cards. Thus, the number of such squares is the number of ways to shuffle the entries of a strongly magic quad square to produce another strongly magic quad square. Since all entries are distinct, no squares are fixed by the shuffling, so every equivalence class of squares in the EvenQuads-$2^n$ deck under shuffling has the same number of elements. This means that the number of strongly magic quad squares in the EvenQuads-$2^n$ deck is divisible by the number of strongly magic quad squares in the EvenQuads-16 deck. After dividing, we get 
\[\frac{2^{5n} - 15\cdot 2^{4n}+ 70\cdot 2^{3n} - 120 \cdot 2^{2n} + 64 \cdot 2^n}{322560},\]
which corresponds to one of the definitions of sequence A308436:
\[1,\ 62,\ 2604,\ 94488,\ 3212592,\ 105911904,\ 3439615168,\ 110880192896,\ \ldots.\]
	
A strongly magic quad square forms a tesseract in our affine space. We can view a tesseract as an EvenQuads-16 deck. Suppose we arrange the EvenQuads-16 deck as a strongly magic quad square. Consider a linear function from an EvenQuads-16 deck to an EvenQuads-64 deck. Such a function preserves the property of four cards forming a quad. Then if we arrange the images of this function the same way as we arranged the initial EvenQuads-16 deck, we get a strongly magic quad square.

In a strongly magic quad square, the quad types of the first row and first column determine the types of the other rows and columns. We can prove that each row must be the same type of quad and similarly for each column. Let the cards on the first row from left to right be $a_1$, $a_2$, $a_3$, and $a_4$, and the cards on the second row from left to right be $b_1$, $b_2$, $b_3$, and $b_4$ Let $b_1=a_1+d$. We know from a strongly magic quad square's definition that $a_1+a_k+b_1+b_k = 0$, for $k \in \{2,3,4\}$. By substituting $b_1$, we get $a_1+a_k+a_1+d+b_k = 0$, which is equivalent to $a_k+d+b_k = 0$. Therefore, $b_k=a_k+d$.

Thus, we can see that the second row is the first row plus $d$. So, the second row is parallel to the first row. We can do the same for the other rows, and the same applies to columns as well. Since parallel planes have points in the same position in relation to the other points in the plane, all rows must be of the same type, and all columns must be of the same type.

Restricting our Table~\ref{table:pairs} to cases where rows/columns are the same type, we get the following Table~\ref{table:SMSpairs}.
\begin{table}[ht!]
\begin{center}
\begin{tabular}{|c|c|c|c|}
\hline
     & DDDD & HHHH                                                              & SSSS    \\ \hline
DDDD & 4,4,4,4 & 4,4,4,4                                                        & 4,4,4,4    \\ \hline
HHHH &      & \begin{tabular}[c]{@{}c@{}}0,0,8,8\\ 0,4,4,8\\ 0,4,6,6\\ 4,4,4,4\end{tabular} & 0,0,8,8    \\ \hline
SSSS &      &                                                                   & 0,0,0,16 \\ \hline
\end{tabular}
\end{center}
\caption{Potential types and distributions.}
\label{table:SMSpairs}
\end{table}

The table suggests the following squares might be possible: DDDD/DDDD, DDDD/HHHH, DDDD/SSSS, and HHHH/HHHH with distribution 4,4,4,4. Also, HHHH/SSSS with distribution 0,0,8,8, and SSSS/SSSS with distribution 0,0,0,16. In addition, we have HHHH/HHHH with distributions 0,0,8,8, 0,4,4,8, and 0,4,6,6. However, the last two are impossible.

Here is the explanation of why the last two distributions are impossible: they use exactly three different values. If we pick any three cards with three different values in a square, they have to be completed into a quad. The completion card has to have the fourth value, which creates a contradiction.

\subsection{Classification of strongly magic quad squares in the EvenQuads-16 deck}

In this section, for a given attribute, we classify all possible strongly magic quad squares up to symmetries. The symmetries are: relabeling the values, shuffling rows and columns, and rotations and reflections of the square.

\begin{theorem}
Table~\ref{table:SMSClass} shows all possible strongly magic quad squares for one attribute up to symmetries using the EvenQuads-16 deck.
\end{theorem}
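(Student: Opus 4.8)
The plan is to recast a strongly magic quad square, for a single attribute, as an affine map and then classify such maps by rank data. Label the cell in row $r$ and column $s$ by the pair $(r,s) \in \mathbb{Z}_2^2 \times \mathbb{Z}_2^2 = \mathbb{Z}_2^4$, and write $\phi(r,s) \in \mathbb{Z}_2^2$ for the value in that cell. The strongly magic condition says exactly that $\phi$ is quad-preserving: whenever four positions XOR to $\vec{0}$, their values XOR to $\vec{0}$. A short computation (already implicit in the paper's proof that the rows and columns of a strongly magic square are parallel planes) shows that a map between $\mathbb{Z}_2$-vector spaces is quad-preserving if and only if it is affine; indeed, from $\phi(\vec{0})+\phi(\vec{a})+\phi(\vec{b})+\phi(\vec{a}+\vec{b})=\vec{0}$ one gets that $\phi - \phi(\vec{0})$ is additive, hence linear. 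So I would first record that every such square has the form $\phi(r,s)=L_1 r + L_2 s + c$ for linear maps $L_1,L_2$ (that is, $2\times 2$ matrices over $\mathbb{Z}_2$) and a constant $c \in \mathbb{Z}_2^2$.

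Next I would read off the combinatorics from the pair $(L_1,L_2)$. A row is obtained by fixing $r$ and varying $s$, so its value set is a coset of $\mathrm{im}\,L_2$; thus each row has type D, H, or S according as $\mathrm{rank}\,L_2$ is $2$, $1$, or $0$, and symmetrically each column's type is governed by $\mathrm{rank}\,L_1$. The global value distribution is governed by $d=\dim(\mathrm{im}\,L_1+\mathrm{im}\,L_2)$: the number of distinct values equals $2^{d}\in\{1,2,4\}$, never $3$. This immediately rules out the HHHH/HHHH distributions $0,4,4,8$ and $0,4,6,6$, each of which uses three values, matching the remark after Table~\ref{table:SMSpairs}. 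I would then describe the symmetry action on $(L_1,L_2)$. Normalizing $c=\vec{0}$ uses the value-translation symmetry; relabeling values contributes post-composition by $GL(2,\mathbb{Z}_2)$. The key observation is that an arbitrary shuffle of the four rows is the same as an affine bijection of the row coordinate, because $AGL(2,\mathbb{Z}_2)\cong S_4$ realizes all $24$ permutations of $\{0,1,2,3\}$ (and likewise for columns), so these contribute independent pre-compositions by $GL(2,\mathbb{Z}_2)$ on the $r$- and $s$-coordinates. Rotations and reflections add only the transpose $r\leftrightarrow s$. Hence two squares are equivalent exactly when their pairs are related by $(L_1,L_2)\mapsto(RL_1P,\,RL_2Q)$ with $R,P,Q\in GL(2,\mathbb{Z}_2)$, together with the swap $(L_1,L_2)\mapsto(L_2,L_1)$.

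Finally I would show that each admissible rank profile forms a single orbit and then verify the normal forms. The complete list of invariants is $\mathrm{rank}\,L_1$, $\mathrm{rank}\,L_2$ (each $0,1,2$), and, when both equal $1$, whether $\mathrm{im}\,L_1=\mathrm{im}\,L_2$; using the swap to identify $(\mathrm{rank}\,L_1,\mathrm{rank}\,L_2)$ with its reverse leaves exactly the seven cases of Table~\ref{table:SMSClass}. For each case I would reduce by a simultaneous argument: choose $R$ to normalize $L_1$ (its rank and image), then use $Q$ and the residual freedom in $R$ and $P$ to normalize $L_2$, arriving at a unique representative.

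The one delicate point, and the main obstacle, is the case $\mathrm{rank}\,L_1=\mathrm{rank}\,L_2=1$, where the single matrix $R$ must serve both blocks, so the reduction cannot treat them independently. Here I would split on whether the two images coincide (giving distribution $0,0,8,8$) or are distinct and hence span $\mathbb{Z}_2^2$ (giving $4,4,4,4$), and check that the stabilizer of the normalized $L_1$ still acts transitively on the rank-one choices of $L_2$ within each image class. Distinctness of the seven orbits is then immediate, since the unordered pair of row/column types and the value distribution are symmetry invariants, and the seven squares realize seven distinct such pairs (in particular the two HHHH/HHHH cases are separated by $4,4,4,4$ versus $0,0,8,8$). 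This exhibits the entries of Table~\ref{table:SMSClass} as a complete and irredundant set of orbit representatives.
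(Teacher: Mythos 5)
Your proposal is correct, but it takes a genuinely different route from the paper. The paper first establishes (in the preceding subsection, via the translation argument you cite) that in a strongly magic quad square all rows share one type and all columns share one type, and then runs a hands-on case analysis over the six unordered type pairs DD, DH, DS, HH, HS, SS: in each case it normalizes the first row and first column (to $0123$, $0011$, or $0000$) and completes the square directly, finding one representative per case except HH, which splits into two. You instead encode the one-attribute pattern as a map $\phi:\mathbb{Z}_2^2\times\mathbb{Z}_2^2\to\mathbb{Z}_2^2$, observe that the strongly magic condition is exactly quad-preservation and hence affinity, $\phi(r,s)=L_1r+L_2s+c$, and classify pairs $(L_1,L_2)$ under $(L_1,L_2)\mapsto(RL_1P,\,RL_2Q)$ together with the swap. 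Your rank dictionary ($\mathrm{rank}\ 2,1,0$ corresponding to types D, H, S) recovers the paper's type analysis, and your invariants --- the two ranks plus, in the rank-$(1,1)$ case, whether $\mathrm{im}\,L_1=\mathrm{im}\,L_2$ --- yield exactly the seven orbits of Table~\ref{table:SMSClass}. Two points are essential and you address both: first, translating ``shuffling rows'' into pre-composition by $GL(2,\mathbb{Z}_2)$ (after absorbing translations) rests on the coincidence $AGL(2,\mathbb{Z}_2)\cong S_4$, i.e., every permutation of the four rows is affine, which is special to the $4\times 4$ case and is precisely what makes your symmetry dictionary exact; second, the shared left factor $R$ couples $L_1$ and $L_2$, and the transitivity check in the rank-$(1,1)$ case does go through (the stabilizer of a normalized rank-one $L_1$ still interchanges the two lines other than $\mathrm{im}\,L_1$, while $Q$ freely adjusts kernels). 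Your approach buys a uniform, nearly computation-free explanation of why each type pair admits exactly one square (two for HH), a structural proof that the number of distinct values is $2^{\dim(\mathrm{im}\,L_1+\mathrm{im}\,L_2)}$ and hence never $3$ (which the paper argues separately after Table~\ref{table:SMSpairs}), and a framework that generalizes beyond the EvenQuads-16 deck. The paper's approach buys elementarity and self-containment, and it certifies realizability by exhibiting each square explicitly rather than through normal forms.
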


\begin{table}[ht!]
\begin{center}
\begin{tabular}{|c|c|c|c|}
\hline
0 & 1 & 2 & 3 \\ \hline
1 & 0 & 3 & 2 \\ \hline
2 & 3 & 0 & 1 \\ \hline
3 & 2 & 1 & 0 \\ \hline
\end{tabular}
\quad
\begin{tabular}{|c|c|c|c|}
\hline
0 & 1 & 2 & 3 \\ \hline
0 & 1 & 2 & 3 \\ \hline
1 & 0 & 3 & 2 \\ \hline
1 & 0 & 3 & 2 \\ \hline
\end{tabular}
\quad
\begin{tabular}{|c|c|c|c|}
\hline
0 & 1 & 2 & 3 \\ \hline
0 & 1 & 2 & 3 \\ \hline
0 & 1 & 2 & 3 \\ \hline
0 & 1 & 2 & 3 \\ \hline
\end{tabular}
\end{center}
\begin{center}
\begin{tabular}{|c|c|c|c|}
\hline
0 & 0 & 1 & 1 \\ \hline
0 & 0 & 1 & 1 \\ \hline
1 & 1 & 0 & 0 \\ \hline
1 & 1 & 0 & 0 \\ \hline
\end{tabular}
\quad
\begin{tabular}{|c|c|c|c|}
\hline
0 & 0 & 1 & 1 \\ \hline
0 & 0 & 1 & 1 \\ \hline
2 & 2 & 3 & 3 \\ \hline
2 & 2 & 3 & 3 \\ \hline
\end{tabular}
\quad
\begin{tabular}{|c|c|c|c|}
\hline
0 & 0 & 1 & 1 \\ \hline
0 & 0 & 1 & 1 \\ \hline
0 & 0 & 1 & 1 \\ \hline
0 & 0 & 1 & 1 \\ \hline
\end{tabular}
\quad
\begin{tabular}{|c|c|c|c|}
\hline
0 & 0 & 0 & 0 \\ \hline
0 & 0 & 0 & 0 \\ \hline
0 & 0 & 0 & 0 \\ \hline
0 & 0 & 0 & 0 \\ \hline
\end{tabular}
\end{center}
\caption{Strongly magic quad squares in the EvenQuads-16 deck.}
\label{table:SMSClass}
\end{table}

\begin{proof}
\textbf{Rows are D, columns are D:} Without loss of generality, we can assume that the first row is 0123 and the first column is 0123. We can complete this square uniquely; see the first square in the table.

\textbf{Rows are D, columns are H:} Without loss of generality, we can assume that the first row is 0123 and the first column is 0011. We can complete this square uniquely. See the second square in the table.

\textbf{Rows are D, columns are S:} After relabeling, we can assume that our square has all rows 0123. See the third square in the table.

\textbf{Rows are H, columns are H:} Without loss of generality, we can assume that the first row is 0011. The first column has to have two zeros. The other value in the first column could be either the same as the other value in the first row or not. Thus, we have two cases, which are the fourth and fifth in the table.

\textbf{Rows are H, columns are S:} Without loss of generality, we can assume that the first row is 0011 and the first column is 0000. After completing the square, we get the sixth square in the table.

\textbf{Rows are S, columns are S:} When all rows and columns are S, there is only one value of the given attribute. That means all the cards of this value are used. Without loss of generality, the square is the last one in the table.
\end{proof}

If one attribute is the same, we have 16 cards in total. From Theorem~\ref{thm:smsnumber}, the total number of strongly magic quad squares is, in this case, $16 \cdot 15 \cdot 14 \cdot 12 \cdot 8 = 322560$.

Each value has to appear four times. That means we can look at the list above. The corresponding cases are DS, DD, DH, and the second case for HH.

\section{The Number of Semimagic Quad Squares}
\label{sec:numbersmsquares}

In this section, we calculate the number of semimagic quad squares for a given EvenQuads-$2^n$ deck. As before, we enumerate the cards with binary strings. Moreover, we convert strings to numbers 0 through $2^n-1$.

By Lemma~\ref{lemma:symmetrymultiplier}, we need to calculate the number of semimagic quad squares of type C. We checked by a program that for an EvenQuads-16 deck, the number of semimagic quad squares of type C is 112. Thus, the total number of semimagic quad squares for the EvenQuads-16 deck has to be 
\[112 \cdot 16 \cdot 15 \cdot 14 \cdot 12 \cdot 8 = 36126720.\]

Now we need to count semimagic quad squares of type C for any deck. Every type C square can be decomposed into a sum of a semimagic quad square that consists of numbers below 16 and the other one of numbers divisible by 16. For example
\begin{center}
\begin{tabular}{|c|c|c|c|}
\hline
0 & 1 & 2 & 3 \\ \hline
4 & 17 & 32 & 53 \\ \hline
8 & 5 & 6 & 11 \\ \hline
12 & 21 & 36 & 61 \\ \hline
\end{tabular}
= 
\begin{tabular}{|c|c|c|c|}
\hline
0 & 1 & 2 & 3 \\ \hline
4 & 1 & 0 & 5 \\ \hline
8 & 5 & 6 & 11 \\ \hline
12 & 5 & 4 & 13 \\ \hline
\end{tabular}
+
\begin{tabular}{|c|c|c|c|}
\hline
0 & 0 & 0 & 0 \\ \hline
0 & 16 & 32 & 48 \\ \hline
0 & 0 & 0 & 0 \\ \hline
0 & 16 & 32 & 48 \\ \hline
\end{tabular} .
\end{center}

The first semimagic quad square in the sum has to have the first row 0, 1, 2, and 3, the first column 0, 4, 8, and 12, and all the numbers not exceeding 16. But it can have repeat values. We call such a square \textit{type A}. The second semimagic quad square has to have its first row and column all zeros and other values divisible by 16. It can have repeat integers too. Let us call such a semimagic quad square \textit{type B}. Thus, any square of type C is a sum of squares of types A and B.

However, a sum of squares of type A and type B might have repeated integers, thus, might not produce a square of type C. But how can we get repeated numbers in C? Since square A has integers below 16, and square B has integers divisible by 16, the only way square C can have two repeated integers in places $p_1$ and $p_2$ is if both A and B have repeated integers in places $p_1$ and $p_2$. 

We start by discussing possible patterns of repeated numbers in a square of type B. We denote the value in row $i$ and column $j$ of square $A$, respectively $B$, by $a_{i,j}$, respectively $b_{i,j}$. For example, consider the decomposition example above. The pattern of repeated numbers is the following: $b_{2,2} = b_{4,2} = 16$, $b_{2,3} = b_{4,3} = 32$, $b_{2,4} = b_{4,4} =48$, and all other numbers are equal to zero. It follows that for square C not to have repeated numbers, square A needs to satisfy the following conditions: $a_{2,2} \neq a_{4,2}$, $a_{2,3} \neq a_{4,3}$, $a_{2,4} \neq a_{4,4}$, and numbers $a_{3,2}$, $a_{3,3}$, and $a_{3,4}$ are all different and differ from the first row/column of square A, that is they are not in the set $\{0,1,2,3,4,8,12\}$. We call the set of pairs of repeat integers in square B, \textit{a repeat pattern of B}.

Given square $B$ of type B, we call square $A$ of type A a \textit{legit pair of $B$} if $A+B$ does not have repeated numbers. 

In the next lemma, we show that we can look at repeat patterns up to symmetries.

\begin{lemma}
If two squares $B_1$ and $B_2$ of type B have the same repeat pattern up to shuffling rows/columns and reflections along the main diagonal, then the number of legit pairs for $B_1$ is the same as the number of legit pairs of $B_2$.
\end{lemma}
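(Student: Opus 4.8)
The plan is to recast \textbf{legit pair} as a purely combinatorial avoidance condition and then move legit pairs of $B_1$ to legit pairs of $B_2$ through the very symmetry that identifies their repeat patterns. For the recasting I would use the observation made just before the lemma: since every entry of a type-A square $A$ is less than $16$ and every entry of a type-B square $B$ is a multiple of $16$, two cells $p,q$ satisfy $(A+B)_p=(A+B)_q$ if and only if $a_p=a_q$ and $b_p=b_q$ simultaneously. Hence $A$ is a legit pair of $B$ precisely when $A$ takes distinct values on every cell-pair of the repeat pattern $R$ of $B$; write $N(R)$ for the number of type-A squares with this avoidance property. The number of legit pairs of $B$ equals $N(R)$, so the lemma reduces to showing $N(R_1)=N(R_2)$ whenever $R_2=\pi(R_1)$ for a symmetry $\pi$ generated by row and column shuffles together with the reflection along the main diagonal.

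To prove this I would build an explicit bijection $\Phi$ between the type-A squares avoiding $R_1$ and those avoiding $R_2$. Starting from such an $A$, first permute its cells by $\pi$. Since $\pi$ sends rows and columns to rows and columns and the quad condition is symmetric, $\pi(A)$ is again semimagic with all entries below $16$; and since $\pi$ maps $R_1$ onto $R_2$ while a cell-permutation preserves coincidences, $\pi(A)$ avoids $R_2$. I would then apply the unique value relabeling, an affine map of $\mathbb{Z}_2^4$, that resets the first row to $0,1,2,3$ and the first column to $0,4,8,12$. A relabeling is a bijection on values, so it preserves semimagicness and the avoidance of $R_2$ and returns a genuine type-A square; running the same construction with $\pi^{-1}$ produces the inverse of $\Phi$, giving $N(R_1)=N(R_2)$.

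The main obstacle is to guarantee that the relabeling in the previous step actually exists, that is, that the first row and first column of $\pi(A)$ are again of type D and spread out enough for an affine map to carry them to $0,1,2,3$ and $0,4,8,12$. This can fail if $\pi$ drags an interior row or column of $A$ --- which may be of type H or S --- into the first position. I would dispose of this by exploiting the rigidity of type-B repeat patterns: the first row and first column of every type-B square are identically zero, so $R$ always contains the seven-cell $\Gamma$-shaped clique formed by these cells, and this clique is distinguished among the monochromatic cliques of $R$. Using this feature I would show that the given $\pi$ may be replaced by one that preserves the $\Gamma$-clique --- equivalently, that fixes the first row and first column as lines, possibly interchanging them through the diagonal reflection. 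Once $\pi$ does so, the first row of $\pi(A)$ is merely a reordering of $0,1,2,3$ (or of $0,4,8,12$) and the first column a reordering of $0,4,8,12$ (or of $0,1,2,3$); both are of type D and in general position, so the required affine relabeling is uniquely determined and $\Phi$ is well defined. The residual degenerate cases, where $B$ has repeated interior values and the $\Gamma$-clique is not strictly the largest, I would settle by appealing to the classification of the possible row and column quad types of a type-B square.
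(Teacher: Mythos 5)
Your proposal is correct and is essentially the paper's own argument: transport a legit pair by the grid symmetry and then repair it with the unique affine relabeling of values, your avoidance reformulation playing the role of the paper's observation that a repeat in $A+B$ forces simultaneous repeats in $A$ and $B$ (and, pleasantly, making it unnecessary to insist, as the paper does, that the relabeling fix type-B squares). Two small points: semimagicness is preserved because the relabeling is \emph{affine} (a mere bijection on values would not preserve the quad condition, so your stated justification should be tightened), and the degenerate cases you defer at the end never arise for the symmetries the paper actually uses --- permutations of the last three rows/columns and the diagonal reflection, which fix the first row and column as lines --- so your $\Gamma$-clique argument is exactly the existence statement the paper asserts without proof when it invokes the affine transformation $a$.
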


\begin{proof}
Consider the only square of type C in the EvenQuads-16 deck. Every transformation $t$ of this square that includes shuffling rows/columns and reflections along the main diagonal can be expressed as an affine transformation $a$ in our space that does not change $B_i$. For example, swapping the second and the third columns corresponds to swapping the last two digits in the binary representation of numbers representing our cards. Similarly, reflection across the main diagonal corresponds to swapping the third to last and the last digits and the second to last and the fourth to last digits. Other transformations are similar.

Suppose we get square $B_2$ after applying $t$ to $B_1$: $B_2 = t(B_1)$. Consider $A_1$, which is a legit pair for $B_1$. Then $t(A_1) + t(B_1) = t(A_1) + B_2$ does not have repeated numbers, making $t(A_1)$ potentially a legit pair of $B_2$, except that $t(A_1)$ might not be of type C, which is a necessary condition for a legit pair. Let $a$ be the affine transformation in our space that does not change squares of type B while changing $t(A_1)$ to type C. We apply these transformation on both $t(A_1)$ and $B_2$: $at(A_1) = A_2$ and $aB_2 = B_2$. Thus, we found a one-to-one correspondence between legit pairs of $B_1$ and $B_2$.
\end{proof}

The lemma above allows us to study squares of type B up to symmetries (row/column shuffling and reflections). Given square $B_1$ of type B, we denote $F(B_1)$, \textit{the symmetry factor} of $B_1$: the number of different repeat patterns we can get from $B_1$ by shuffling the last three rows and columns and reflecting along the main diagonal.

Now we study squares of type B$'$, which are the squares of type B divided by 16. We can multiply everything by 16 later. We want to classify such squares up to relabeling, shuffling rows/columns, and reflections along the main diagonal.

\begin{lemma}
Table~\ref{table:B'} provides all possible squares of type B$'$ up to affine transformations, permutations of the last three rows/columns, and reflections along the main diagonal.
\end{lemma}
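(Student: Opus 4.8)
The plan is to reduce a square of type B$'$ to its essential data and then classify that data by the dimension of a single span. Since a square of type B$'$ has an all-zero first row and first column, all of its content sits in the lower-right $3\times 3$ block $(b_{ij})_{2\le i,j\le 4}$. The semimagic condition forces every row and every column of this block to bitwise XOR to zero, so the nine entries are completely determined by the four corner entries $p=b_{22}$, $q=b_{23}$, $r=b_{32}$, $s=b_{33}$ via $b_{24}=p\oplus q$, $b_{34}=r\oplus s$, $b_{42}=p\oplus r$, $b_{43}=q\oplus s$, and $b_{44}=p\oplus q\oplus r\oplus s$; conversely any choice of $(p,q,r,s)$ in $\mathbb{Z}_2^{\,m}$ (with $m=n-4$ after dividing by $16$) yields a valid square. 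Thus the classification problem is exactly the classification of the $2\times 2$ array $\left(\begin{smallmatrix} p & q\\ r & s\end{smallmatrix}\right)$ of vectors, up to the three allowed symmetries.

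The next step is to recast the symmetries in linear-algebraic terms so they become tractable. I would observe that a $3\times 3$ block with vanishing row and column XOR is precisely the table of values $\beta(\xi_i,\eta_j)$ of a bilinear map $\beta:\mathbb{Z}_2^2\times\mathbb{Z}_2^2\to U$, $U=\mathbb{Z}_2^{\,m}$, evaluated on the three nonzero arguments, since the row and column XOR conditions are exactly bilinearity in each slot. Under this dictionary, permuting the last three rows is the induced $S_3\cong GL_2(\mathbb{Z}_2)$ action on the nonzero vectors of the first factor, permuting the last three columns is the same action on the second factor, reflecting along the main diagonal swaps the two factors, and relabeling values acts as $GL(U)$ on the target. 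Since the all-zero first row and column must be preserved, the relevant relabelings act linearly. Hence the problem is to classify the matrix of vectors $(p,q,r,s)$ up to row operations, column operations, change of basis in $U$, and transpose, which is a finite computation.

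I would organize the case analysis by the primary invariant $d=\dim\,\mathrm{span}\{p,q,r,s\}\in\{0,1,2,3,4\}$, which is fixed by all the symmetries; after a target change of basis we may assume the image equals $U$, so $d=\dim U$. The extreme cases are immediate: $d=0$ gives the all-zero square, and $d=4$ gives four independent vectors, unique up to $GL(U)$ and hence a single canonical form. For $d=1$ every entry is $0$ or a fixed nonzero $v$, so the data is a $0/1$ matrix classified by its rank over $\mathbb{Z}_2$ through row and column reduction, giving the rank-$1$ and rank-$2$ forms. For each surviving canonical form I would read off its repeat pattern (which of the sixteen entries coincide) and match it against the corresponding row of Table~\ref{table:B'}.

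The hard part will be the intermediate dimensions $d=2$ and $d=3$, where the target base change is no longer free. For $d=2$ the group $GL(U)\cong S_3$ permutes the three nonzero values $\{a,b,a\oplus b\}$ and must be applied simultaneously with row and column operations; for $d=3$ one must track the unique linear relation among $p,q,r,s$ (a nonempty subset XOR-ing to zero) together with its position in the grid. The main obstacle is therefore to reduce these two cases to a complete and irredundant list of normal forms. I would handle them by first fixing a canonical form for the relation or for the value-pattern, then quotienting by the residual stabilizer, taking care that the transpose symmetry does not secretly identify two forms I have listed separately, and conversely that no admissible pattern is overlooked.
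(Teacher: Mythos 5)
Your reduction is sound, and it is genuinely different from the paper's argument. The semimagic conditions do force a type B$'$ square to be determined by the four entries $p=b_{2,2}$, $q=b_{2,3}$, $r=b_{3,2}$, $s=b_{3,3}$; the bilinear-map dictionary is accurate; and your identification of the symmetry group is exactly right (permutations of the last three rows/columns become the $GL_2(\mathbb{Z}_2)\cong S_3$ actions on the two source factors, the diagonal reflection becomes the swap of factors, and the affine relabelings act linearly on values because they must fix the all-zero first row and column). Your cases $d=0$, $d=1$, and $d=4$ are complete and correctly account for squares 1, 2, 4, and 10 of Table~\ref{table:B'}. By contrast, the paper proceeds by a direct case analysis organized by the number of zeros and by the D/H/S types of the second row and second column; your framing is more structural and, if finished, would yield a less ad hoc proof.

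However, as written the proposal has a genuine gap: the cases $d=2$ and $d=3$ are never carried out, and they contain six of the ten squares in the table (squares 3, 5, 6, 8 for $d=2$ and squares 7, 9 for $d=3$). For these you only describe how you would organize the computation (``fix a canonical form for the relation or the value-pattern, then quotient by the residual stabilizer''), which is a plan, not an argument; nothing in the proposal establishes that there are exactly four orbits in dimension 2 and exactly two in dimension 3, nor matches the resulting normal forms to the rows of the table. For $d=3$ your own language does admit a quick completion: the unique linear relation among $p,q,r,s$ is a nonzero element of $(\mathbb{Z}_2^2\otimes\mathbb{Z}_2^2)^*$, the row/column/transpose symmetries act on it through $GL_2\times GL_2$ and the swap, and nonzero $2\times 2$ tensors over $\mathbb{Z}_2$ form exactly two orbits (tensor rank 1 and 2), giving squares 7 and 9. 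The case $d=2$ — surjective-onto-span bilinear maps $\mathbb{Z}_2^2\times\mathbb{Z}_2^2\to\mathbb{Z}_2^2$ modulo a group of order $6\cdot 6\cdot 2\cdot 6$ — still requires an explicit orbit enumeration that you have not performed. Until both enumerations are done and each canonical form is identified with an entry of Table~\ref{table:B'}, the lemma is not proved.
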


\begin{table}[ht!]
\begin{center}
\begin{tabular}{|c|c|c|c|}
\hline
0&0&0&0\\\hline
0&0&0&0\\\hline
0&0&0&0\\\hline
0&0&0&0 \\ \hline
\end{tabular}
\quad
\begin{tabular}{|c|c|c|c|}
\hline
0&0&0&0\\\hline
0&0&0&0\\\hline
0&0&1&1\\\hline
0&0&1&1 \\ \hline
\end{tabular}
\quad
\begin{tabular}{|c|c|c|c|}
\hline
0&0&0&0\\\hline
0&0&0&0\\\hline
0&1&2&3\\\hline
0&1&2&3 \\ \hline
\end{tabular}
\quad
\begin{tabular}{|c|c|c|c|}
\hline
0&0&0&0\\\hline
0&0&1&1\\\hline
0&1&0&1\\\hline
0&1&1&0 \\ \hline
\end{tabular}
\quad
\begin{tabular}{|c|c|c|c|}
\hline
0&0&0&0\\\hline
0&0&1&1\\\hline
0&1&2&3\\\hline
0&1&3&2 \\ \hline
\end{tabular}
\end{center}
\begin{center}
\begin{tabular}{|c|c|c|c|}
\hline
0&0&0&0\\\hline
0&0&1&1\\\hline
0&2&0&2\\\hline
0&2&1&3 \\ \hline
\end{tabular}
\quad
\begin{tabular}{|c|c|c|c|}
\hline
0&0&0&0\\\hline
0&0&1&1\\\hline
0&2&4&6\\\hline
0&2&5&7 \\ \hline
\end{tabular}
\quad
\begin{tabular}{|c|c|c|c|}
\hline
0&0&0&0 \\ \hline
0&1&2&3 \\ \hline
0&2&3&1 \\ \hline
0&3&1&2 \\ \hline
\end{tabular}
\quad
\begin{tabular}{|c|c|c|c|}
\hline
0&0&0&0 \\ \hline
0&1&2&3 \\ \hline
0&2&4&6 \\ \hline
0&3&6&5 \\ \hline
\end{tabular}
\quad
\begin{tabular}{|c|c|c|c|}
\hline
0&0&0&0 \\ \hline
0&1&2&3 \\ \hline
0&4&8&12 \\ \hline
0&5&10&15 \\ \hline
\end{tabular}
\end{center}
\caption{Squares of type B$'$.}
\label{table:B'}
\end{table}

\begin{proof}
We start with squares that contain at least 8 zeros. Because of symmetries, we can assume that $a_{2,2} = 0$. Then, without loss of generality, the second row can be either 0000 or 0011. We look at these cases separately.

\textbf{The second row is 0000.} Then, without loss of generality, the third row is one of 0000, 0011, and 0123. These are cases 1, 2, and 3 in the table.

\textbf{The second row is 0011.} We also assume that the second column is not zero; otherwise, by reflection, it would have corresponded to the previous cases. Without loss of generality, the second column is either 0011 or 0022. 

\begin{itemize}
\item \textbf{The second column is 0011.} The third row can either be type D or H. If it is type H, then, remembering we can swap the last two columns, we get case 4 in the table. If the third row is D, then we get case 5.
\item \textbf{The second column is 0022.} The third row can again be type H or D. If it is type H, then again, by swapping the third and fourth columns, we can place 0 on the diagonal, and we get case 6. If the third row is type D, we consider the value $a_{3,3}$, which cannot be 0 or 2. Suppose it is 1 or 3, then we get case 6 after shuffling rows and columns. If $a_{3,3}$ is greater than 3, without loss of generality, we can assume that it is 4 and get case 7. 
\end{itemize}

We continue with cases with 7 zeros. For all the rows and columns other than the first row and the first column, if they are type H or S, we will have more than 7 zeros which is not allowed. Therefore, all these rows and columns are type D. Without loss of generality, let the second row be 0, 1, 2, 3. The second column is either in the same plane or not. If it is in the same plane, we can assume, without loss of generality, that it is 0, 1, 2, 3. If the second column is not in the same plane, we can assume, without loss of generality, that $a_{3,2} = 4$. Thus, the second column is 0, 1, 4, 5. In any case, after that, the square is uniquely defined by its value $a_{3,3}$ in row 3, column 3. We proceed with the two cases we identified.

\textbf{The second column is 0, 1, 2, 3.} If the third row is in the same plane as both the second row and column, then $a_{3,3}$ has to be 1 or 3. If it is 1, we get that $a_{4,4}$ must be 0, which is not allowed. If $a_{3,3}$ is 3, we get case 8 in Table~\ref{table:B'}. If the third row is not in the same plane as both the second row and column, then, without loss of generality, we can assume $a_{3,3}$ is 4. We get that $a_{3,4}$ and $a_{4,3}$ must be 6 and that $a_{4,4}$ is 5. This is case 9.

\textbf{The second column is 0, 1, 4, 5.} Trying values of $a_{3,3}$, we first check what happens if $a_{3,3}$ is in the same hyperplane as the one generated by the second rows and columns: we check values from 0 to 7. Note that we must have no columns that are a permutation of 0, 1, 2, 3, as otherwise, after shuffling the rows and columns, we can get cases 8 or 9, which we already covered. There cannot be more 0s, so $a_{3,3}$ cannot be 0. Also, $a_{3,3}$ cannot be 1 or 3, or the third column would be a permutation of 0, 1, 2, 3. It cannot be 2 or 4 because it would make the third row or third column type H, which means including another 0. It cannot be 5 or 6 because $a_{4,3}$ would be either 1 or 2, which completes the last column into a permutation of 0, 1, 2, 3. Finally, it cannot be 7 or $a_{4,3}$ is 3, which makes the last column type H. So we do not get a case when $a_{3,3}$ is in the same hyperplane. If $a_{3,3}$ is not in the same hyperplane, then, without loss of generality, it is 8. Then, $a_{3,4}$ and $a_{4,3}$ are 10 and 12 and $a_{4,4}$ is 15. This is the last case in the table.
\end{proof}

Now we calculate the number of squares of type C for each case.

\begin{lemma}
\label{lemma:typeC}
The last column in Table~\ref{table:summary} shows the number of semimagic quad squares of type C for each case. The total number of semimagic quad squares of type C for the EvenQuads-$2^n$ deck is
\begin{align*}
112 + 2823  (2^n - 16) + 2531 & (2^n - 16) (2^n - 32) + 159 (2^n - 16) (2^n - 32) (2^n-64) + \\
  &(2^n - 16) (2^n - 32)(2^n-64)(2^n-128).
\end{align*}
\end{lemma}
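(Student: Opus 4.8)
The plan is to compute the number of type C squares as a single sum over all type B squares and then reorganize that sum according to the ten classes of Table~\ref{table:B'}. Since every type C square decomposes uniquely as $A+B$ with $A$ of type A and $B$ of type B, and since $A+B$ has no repeated entry exactly when $A$ is a legit pair of $B$, the desired count equals $\sum_{B}\ell(B)$, where $\ell(B)$ is the number of legit pairs of $B$. By the preceding lemma, $\ell(B)$ depends only on the repeat pattern of $B$ up to shuffling the last three rows and columns and reflecting along the main diagonal; hence $\ell$ is constant on each class of Table~\ref{table:B'}, and I would write $\ell_c$ for its common value on class $c$. This turns the problem into evaluating $\sum_{c}\bigl(\#\{B\ \text{in class}\ c\}\bigr)\cdot \ell_c$.

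Next I would factor the number of type B squares in class $c$ as $F_c\cdot R_c$. The symmetry factor $F_c$ counts the distinct repeat patterns produced from the class representative by the allowed row/column shuffles and the diagonal reflection (affine relabelings preserve the repeat pattern), while $R_c$ counts the type B squares realizing one fixed such pattern. Writing $B=16B'$, a type B square is determined by its free $2\times 2$ block, whose entries span a subspace of $\mathbb{Z}_2^{n-4}$ of dimension $k_c$ that I read off from the representative in Table~\ref{table:B'} (giving $k=0,1,2,3,4$ across the cases). Realizing a fixed pattern amounts to choosing $k_c$ linearly independent high-bit generators, so $R_c=\prod_{i=0}^{k_c-1}(2^{n-4}-2^i)$; here I must verify that linear independence of the generators reproduces precisely the prescribed pattern and creates no accidental coincidences among the dependent entries. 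Using the identity
\[
\prod_{i=0}^{k-1}\left(2^n - 2^{4+i}\right) = 16^{k}\prod_{i=0}^{k-1}\left(2^{n-4} - 2^i\right),
\]
each class then contributes $\tfrac{F_c\ell_c}{16^{k_c}}\prod_{i=0}^{k_c-1}(2^n-2^{4+i})$, which is the polynomial I would record in the last column of Table~\ref{table:summary}.

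The computational heart, and the step I expect to be the main obstacle, is the determination of the legit-pair counts $\ell_c$. For each class I would fix the representative $B'$, read off its repeat pattern as the partition of the sixteen positions into blocks of equal value, and then count the type A squares $A$ whose entries are injective on each block (equivalently, $A+B$ has no repeat). Because the first row and first column of a type A square are pinned to the distinct values $0,1,2,3$ and $0,4,8,12$, the seven first-row/column positions never collide; for the sparsest pattern (case 10) these are the only nonsingleton blocks, so all $16^4$ type A squares are legit and $\ell_{10}=16^4$, which together with $F_{10}=1$ gives the leading coefficient $1$. For patterns with larger equal-blocks the injectivity requirement removes many type A squares, and these reduced counts would have to be obtained by a careful finite enumeration of the admissible $2\times 2$ free blocks of $A$, handled case by case and cross-checked by computer, alongside the count of the symmetry factors $F_c$, for all nine nontrivial classes.

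Finally, I would assemble the answer by grouping the contributions by dimension. The all-zero class (case 1, $k=0$) contributes the constant $\ell_1=112$, matching the directly verified EvenQuads-16 count. Summing $\tfrac{F_c\ell_c}{16^{k_c}}$ within each dimension group should yield $2823$ for $k=1$ (cases 2 and 4), $2531$ for $k=2$ (cases 3, 5, 6, 8), $159$ for $k=3$ (cases 7 and 9), and $1$ for $k=4$ (case 10), reproducing the stated polynomial. The only remaining verification is arithmetic: confirming that each $\tfrac{F_c\ell_c}{16^{k_c}}$ is an integer and that the grouped sums match the displayed coefficients.
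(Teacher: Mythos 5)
Your proposal is correct and follows essentially the same route as the paper: decompose each type C square uniquely as $A+B$, invoke the invariance lemma so that the legit-pair count depends only on the repeat-pattern class, and then sum over the ten classes of Table~\ref{table:B'} the product (symmetry factor $F_c$) $\times$ (realizations of a fixed pattern, $\prod_{i=0}^{k_c-1}(2^{n-4}-2^i)$) $\times$ (legit pairs $\ell_c$ found by computer enumeration), which is exactly the structure of the paper's Table~\ref{table:summary} (columns F, Mult, \#A, \#C), with your dimension grouping reproducing the coefficients $112$, $2823$, $2531$, $159$, $1$. The only cosmetic difference is that you describe the "Mult" column as counting injective linear maps on the $k_c$-dimensional span of the $B'$ entries, whereas the paper selects the values $1,2,4,8$ one at a time with factors $(M-1)(M-2)(M-4)(M-8)$; these are identical counts.
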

\begin{table}[ht!]
\begin{center}
\begin{tabular}{|c|c|c|c|c|}
\hline
 & F & Mult & \# A & \# C \\
\hline
1	& 1	& 1 			& 112 				& 112	\\ \hline
2	& 9	& $M-1$ 		& $16 \cdot 119$ 		& $1071(2^n-16)$	\\ \hline
3	& 6	& $(M-1)(M-2)$ 	& $16^2 \cdot 42$ 	& $252(2^n-16)(2^n-32)$	\\ \hline
4	& 6	& $(M-1)$ 		& $16 \cdot 292$ 		& $1752(2^n-16)$	\\ \hline
5	& 9	& $(M-1)(M-2)$ 	& $16^2 \cdot 95$ 	& $855(2^n-16)(2^n-32)$	\\ \hline
6	& 18	& $(M-1)(M-2)$ 	& $16^2 \cdot 64$ 	& $1152(2^n-16)(2^n-32)$	\\ \hline
7	& 9	& $(M-1)(M-2)(M-4)$ 	& $16^3 \cdot 9$ 	& $81(2^n-16)(2^n-32)(2^n-64)$	\\ \hline
8	& 2	& $(M-1)(M-2)$ 	& $16^2 \cdot 136$ 	& $272(2^n-16)(2^n-32)$	\\ \hline
9	& 6	& $(M-1)(M-2)(M-4)$ 	& $16^3 \cdot 13$ 	& $78(2^n-16)(2^n-32)(2^n-64)$	\\ \hline
10	& 1	& $(M-1)(M-2)(M-4)(M-8)$ 	& $16^4$ 	& $(2^n-16)(2^n-32)(2^n-64)(2^n-128)$	\\ \hline
\end{tabular}
\end{center}
\caption{Calculating squares of type C.}
\label{table:summary}
\end{table}

\begin{proof}
The first column in the table is the case number. The second column is the symmetry factor. The calculation of the symmetry factor is straightforward. We provide the explanation for the most interesting cases.

For the second case, the symmetry factor is 9: we can choose the two nontrivial rows in three ways and two nontrivial columns in three ways. For the fourth case, the symmetry factor is 6: we are choosing three cells in a 3-by-3 square such that none of the chosen cells are in the same row or column. For the sixth case, the symmetry factor is 18: we choose the rows that contain zeros in 3 ways and then place zeros in these rows in 6 ways. For the ninth case, the symmetry factor is 6: we can choose places where we put the three distinct numbers in 6 ways; the pattern is then uniquely defined.

The third column represents the number of different squares of type $B'$ for a given case. We assume that the deck size is $M$. If a square has the only non-zero value 1, then without loss of generality, we can choose this value in $M-1$ ways. If the square has, in addition, value two, then we can choose this second value in $M-2$ ways. Then the value 3 is calculated from the bitwise XOR. Then, if the square also has 4, it corresponds to the factor $M-4$. Then the values below 8 are uniquely placed in the square. If there is value 8, it could be chosen in $M-8$ ways.

Column 4 shows the number of squares of type A, that match the restrictions imposed by the pattern in the corresponding case. This calculation was done by a program.

Column 5 shows the number of squares of type C for each case, where we substitute $\frac{2^n}{16}$ for $M$.

Summing up the last column, we get the total.
\end{proof}

For the standard deck with 64 cards, the number of semimagic quad squares of type C is 4023232. The sequence enumerating semimagic quad squares of type C for the EvenQuads-$2^n$ decks, for $n \geq 4$, which is now sequence A362963:
\[112,\ 45280,\ 4023232,\ 136941952,\ 3099135232,\ 58520273920,\ 1015268864512,\ \ldots.\]

Plugging in the number of semimagic quad squares of type C from Lemma~\ref{lemma:typeC} into the formula from Lemma~\ref{lemma:symmetrymultiplier}, we get the following enumeration of semimagic quad squares.

\begin{theorem}
\label{thm:nsms}
In the EvenQuads-$2^n$ deck, there are
\begin{align*}
2^n(2^n-1)&(2^n-2)(2^n-4)(2^n-8)\cdot \\
& (112 + 2823  (2^n - 16) + 2531 (2^n - 16) (2^n - 32) + 159 (2^n - 16) (2^n - 32) (2^n-64) + \\
  &(2^n - 16) (2^n - 32)(2^n-64)(2^n-128)).
\end{align*}
semimagic quad squares. 
\end{theorem}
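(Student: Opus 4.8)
The plan is to assemble this count directly from the two structural results already established, since the theorem is precisely their composition. Lemma~\ref{lemma:symmetrymultiplier} reduces the enumeration of all semimagic quad squares to the enumeration of those of type C, supplying the multiplicative factor $2^n(2^n-1)(2^n-2)(2^n-4)(2^n-8)$ that accounts for the affine-symmetry orbit of each type C square. Lemma~\ref{lemma:typeC} then supplies the number of type C squares as an explicit degree-four polynomial in $2^n$. So the entire argument is a single substitution: multiply the symmetry factor by the type C count.

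First I would invoke Lemma~\ref{lemma:symmetrymultiplier} to write the total as the product of the symmetry multiplier and $N_C(2^n)$, where $N_C$ denotes the number of type C semimagic quad squares. Next I would substitute the closed form for $N_C(2^n)$ from Lemma~\ref{lemma:typeC}, namely
\[
112 + 2823(2^n-16) + 2531(2^n-16)(2^n-32) + 159(2^n-16)(2^n-32)(2^n-64) + (2^n-16)(2^n-32)(2^n-64)(2^n-128).
\]
The formula displayed in the theorem statement is exactly this product left in factored form, so no algebraic simplification or expansion is required to match it.

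The genuine content of the result lives entirely in the two lemmas being cited, not in the theorem itself; once those are in hand the composition is mechanical. The hard part, therefore, is not this theorem but the input Lemma~\ref{lemma:typeC}, whose proof rests on the type A / type B decomposition and the per-case counts tabulated in Table~\ref{table:summary}. If I were establishing everything from scratch, the delicate step would be verifying that the "legit pair" counts in the \#A column of that table are correct and that the symmetry factors $F$ are applied exactly once per repeat-pattern case, so that summing over the ten cases yields $N_C$ with neither omission nor double-counting. Taking those lemmas as given, however, the theorem follows immediately.
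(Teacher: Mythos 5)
Your proposal is correct and matches the paper's own argument exactly: the paper likewise proves this theorem by plugging the type C count from Lemma~\ref{lemma:typeC} into the multiplier from Lemma~\ref{lemma:symmetrymultiplier}, with no further algebra. Your observation that the real content lies in the two cited lemmas is also consistent with how the paper structures the result.
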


For the standard deck with 64 cards, the number of semimagic quad squares is:
\[4023232 \cdot 64 \cdot 63 \cdot 62 \cdot 60 \cdot 56 = 3379298591047680.\]

The sequence enumerating semimagic quad squares for EvenQuads-$2^n$ decks, for $n \geq 4$ is now sequence A362964:
\[36126720,\ 905542041600,\ 3379298591047680,\ 4173723561555394560,\ 3211490275093527920640,\ \ldots.\]

The coefficients in the expression in Theorem~\ref{thm:nsms} represent the number of semimagic quad squares up to affine transformations such that the cards span a hyperplane of a particular dimension. Thus, the number of equivalence classes of semimagic quad squares in the EvenQuads-$2^n$ deck up to affine transformations is given by the following sequence that stabilizes. The sequence starts at index 4:
\[112,\ 2935,\ 5466,\ 5625,\ 5626,\ 5626,\ 5626, \ldots.\]

\section{The Number of Magic Quad Squares}
\label{sec:numbermsquares}

Magic squares have much fewer symmetries, but the code that calculates the number of magic squares is faster. Consider magic squares of type C. We know the first row and column of it. If we know the middle square: $c_{2,2}$, $c_{2,3}$, $c_{3,2}$, and $c_{3,3}$, we can calculate the rest of it. However, these parameters are dependent: we know that bitwise XOR of 3, 12, $c_{2,3}$, and $c_{3,2}$ has to be zero. So, for practical purposes, we have three parameters. We wrote a program to calculate the number of magic squares of type C for decks of sizes 16, 32, 64, and 128 and got the following numbers: 10, 1370, 70138, and 1159994.

The number of magic squares of type C for the EvenQuads-16 deck is small enough, so we list all of them in Table~\ref{table:mstypeC}.

\begin{table}[ht!]
\begin{center}
\begin{tabular}{|c|c|c|c|}
\hline
    0 & 1 & 2&3 \\
        \hline
    4&5&6&7 \\
        \hline
    8&9&10&11 \\
        \hline
    12&13&14&15 \\
        \hline
\end{tabular}
\quad
\begin{tabular}{|c|c|c|c|}
    \hline
    0&1&2&3 \\
        \hline
    4&5&6&7 \\
        \hline
    8&9&11&10 \\
        \hline
    12&13&15&14\\
    \hline
\end{tabular}
\quad
\begin{tabular}{|c|c|c|c|}
    \hline
    0&1&2&3 \\
        \hline
    4&5&6&7 \\
        \hline
    8&9&14&15 \\
        \hline
    12&13&10&11 \\
        \hline
\end{tabular}
\quad
\begin{tabular}{|c|c|c|c|}
    \hline
    0&1&2&3 \\
        \hline
    4&5&6&7 \\
        \hline
    8&9&15&14 \\
        \hline
    12&13&11&10\\
    \hline
\end{tabular}
\end{center}
\begin{center}
\begin{tabular}{|c|c|c|c|}
    \hline
    0&1&2&3 \\
    \hline
    4&6&5&7 \\
    \hline
    8&10&9&11 \\
    \hline
    12&13&14&15 \\
    \hline
\end{tabular}
\quad
\begin{tabular}{|c|c|c|c|}
 \hline
    0&1&2&3 \\
     \hline
    4&7&6&5 \\
     \hline
    8&9&10&11 \\
     \hline
    12&15&14&13 \\
    \hline
\end{tabular}
\quad
\begin{tabular}{|c|c|c|c|}
\hline
    0&1&2&3 \\
    \hline
    4&9&10&7 \\
    \hline
    8&5&6&11 \\
    \hline
    12&13&14&15\\
    \hline
\end{tabular}
\quad
\begin{tabular}{|c|c|c|c|}
\hline
    0&1&2&3 \\
    \hline
    4&10&9&7 \\
    \hline
    8&6&5&11 \\
    \hline
    12&13&14&15\\
    \hline
\end{tabular}
\end{center}
\begin{center}
\begin{tabular}{|c|c|c|c|}
\hline
    0&1&2&3 \\
    \hline
    4&13&6&15\\
    \hline
    8&9&10&11 \\
    \hline
    12&5&14&7\\
    \hline
\end{tabular}
\quad
\begin{tabular}{|c|c|c|c|}
\hline
    0&1&2&3 \\
     \hline
    4&15&6&13 \\
     \hline
    8&9&10&11 \\
     \hline
    12&7&14&5\\
    \hline
\end{tabular}
\end{center}
\caption{Magic squares of type C for the EvenQuads-16 deck.}
\label{table:mstypeC}
\end{table}

The data we received is enough to derive a formula for any deck.

\begin{lemma}
\label{lemma:typeCmagic}
The total number of magic quad squares of type C for the EvenQuads-$2^n$ deck is
\[10 + 85(2^n - 16) + 43(2^n - 16) (2^n - 32) + (2^n - 16) (2^n - 32) (2^n-64).\]
\end{lemma}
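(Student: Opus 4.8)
The plan is to mirror the decomposition strategy already developed for semimagic squares in Lemma~\ref{lemma:typeC}, specialized to the magic case. Every magic quad square of type C can be written as a sum of a \emph{type A} square (first row $0,1,2,3$, first column $0,4,8,12$, all entries below 16, possibly with repeats) and a \emph{type B} square (first row and column all zero, other entries divisible by 16, possibly with repeats). The magic condition adds the two diagonal quad constraints on top of the row and column constraints, and these constraints split cleanly across the two summands since type A lives in the low four bits and type B in the high bits. So I would first establish that a type C magic square corresponds to a legit pair $(A,B)$ where $A$ is a magic type-A square, $B$ is a magic type-B square, and $A+B$ has no repeated entries, exactly as in the semimagic development.

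Next I would classify the magic squares of type B$'$ (type B divided by 16) up to the relevant symmetries, paralleling the table of type B$'$ squares used for the semimagic count but now imposing the diagonal conditions. The magic requirement is far more restrictive, so I expect only a handful of repeat patterns to survive; each surviving pattern comes with a symmetry factor $F$ counting the distinct repeat patterns it generates under shuffling the last three rows/columns and reflecting along the main diagonal, and with a multiplier that records how the nonzero values can be chosen from a deck of size $M=2^n/16$ (contributing the factors $M-1$, $M-2$, $M-4$ as in the polynomial structure of the answer). For each case I would then count, by the same kind of program-assisted enumeration used in Lemma~\ref{lemma:typeC}, the number of magic type-A squares compatible with that repeat pattern (the ``\# A'' data). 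Substituting $M=2^n/16$ and summing $F\cdot(\text{mult})\cdot(\text{\# A})$ over all cases yields a polynomial in $2^n$.

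The final step is to verify that this polynomial equals
\[10 + 85(2^n - 16) + 43(2^n - 16)(2^n - 32) + (2^n - 16)(2^n - 32)(2^n-64).\]
Its shape already tells me what to expect from the case analysis: the constant term $10$ is the number of magic type C squares in the EvenQuads-16 deck (the all-zero type B$'$ pattern, matching Table~\ref{table:mstypeC}); the leading term with coefficient $1$ must come from a single pattern using three independent nonzero values $1,2,4$ (the magic analog of case 10); the middle coefficients $85$ and $43$ aggregate the intermediate patterns. Rather than rederiving the full classification from scratch, I can anchor the formula using the already-computed values $10, 1370, 70138, 1159994$ for decks of size $16, 32, 64, 128$: a cubic in $2^n$ of the stated form has four coefficients, so these four data points determine it uniquely, and checking that the displayed polynomial reproduces all four is a routine finite computation.

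The main obstacle is the type B$'$ classification under the diagonal constraints: ensuring the case list is genuinely exhaustive and that the symmetry factors and value-choice multipliers are assigned correctly, since a single missed pattern or miscounted symmetry factor would corrupt the intermediate coefficients while possibly still matching a few data points by coincidence. Because the formula is a cubic (three nontrivial factors) rather than a quartic, the hardest part is really confirming that no pattern requires a fourth independent value $8$—equivalently, that the magic condition forces the nonzero type B$'$ structure into an affine subspace of dimension at most three—so that the $(2^n-128)$ factor present in the semimagic count does not appear here.
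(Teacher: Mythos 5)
Your fallback argument---fitting the four coefficients of the stated form to the program values $10,\ 1370,\ 70138,\ 1159994$ for decks of size $16, 32, 64, 128$---is exactly the paper's proof. But, as you yourself concede, that interpolation proves nothing until one knows \emph{a priori} that the count has the form
$q_1 + q_2(2^n - 16) + q_3(2^n - 16)(2^n - 32) + q_4(2^n - 16)(2^n - 32)(2^n-64)$
with constant integer $q_i$ and, in particular, no $(2^n-128)$ term; you label this degree bound ``the hardest part'' and leave it open, and that is precisely the gap. The paper closes it with a short parametrization argument that your proposal misses: once the first row $0,1,2,3$ and first column $0,4,8,12$ are fixed, the row and column conditions determine the entire square from the middle block $c_{2,2}, c_{2,3}, c_{3,2}, c_{3,3}$, and both diagonal conditions reduce to the single relation $c_{3,2} = 15 \oplus c_{2,3}$ (given the row and column conditions, the main-diagonal and anti-diagonal constraints are the same equation). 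So a magic square of type C depends on only \emph{three} free parameters, $c_{2,2}, c_{2,3}, c_{3,3}$. Each of these either lies in the affine span of the cards already placed or supplies a new independent direction, contributing successive factors $(2^n-16)$, $(2^n-32)$, $(2^n-64)$; with three parameters there can be at most three new directions, so the quartic term cannot occur and four data points determine the formula. That observation, plus the program values, is the entire proof.

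Your primary route---mirroring the type A / type B decomposition of Lemma~\ref{lemma:typeC}---is also shakier than you present it. The additive splitting itself is fine (the diagonal conditions do separate across the low and high bits), but the semimagic machinery you want to reuse classifies repeat patterns of type B$'$ up to shuffling the last three rows and columns, and those shuffles do not preserve the diagonals; the lemma asserting that symmetric patterns have equally many legit pairs would have to be rebuilt over the much smaller group of transformations preserving both diagonals, with new symmetry factors. That route could likely be pushed through, but none of its quantitative content (the magic pattern list, the multipliers, the \#A counts) is supplied in your proposal, and the paper's three-parameter observation renders all of it unnecessary.
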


\begin{proof}
After the first row and column are fixed, the rest of the magic quad square is defined by the values $c_{2,2}$, $c_{2,3}$, $c_{3,2}$, and $c_{3,3}$. We know that $c_{3,2} = 15 \textrm{ XOR } c_{2,3}$. Thus, the square depends on three parameters: $c_{2,2}$, $c_{2,3}$, and $c_{3,3}$. That means the formula for the number of magic squares has to have the following format.
\[q_1 + q_2(2^n - 16) + q_3(2^n - 16) (2^n - 32) + q_4(2^n - 16) (2^n - 32) (2^n-64),\]
where $q_i$ are integer coefficients. Plugging in known values from the program, we get the desired result.
\end{proof}

The sequence of magic squares of type C for the EvenQuads-$2^n$ deck, where $n \geq 4$ is now sequence A361495:
\[10,\ 1370,\ 70138,\ 1159994,\ 12654010,\ 116939450,\ 1003021498,\ 8303802554,\ 67568410810,\ \ldots.\]

Plugging in the number of magic quad squares of type C from Lemma~\ref{lemma:typeCmagic} into the formula from Lemma~\ref{lemma:symmetrymultiplier}, we get the following enumeration of magic quad squares.

\begin{theorem}
\label{thm:nms}
In the EvenQuads-$2^n$ deck, there are
\begin{align*}
2^n(2^n-1)&(2^n-2)(2^n-4)(2^n-8)\cdot \\
& (10 + 85(2^n - 16) + 43(2^n - 16) (2^n - 32) + (2^n - 16) (2^n - 32) (2^n-64)).
\end{align*}
magic quad squares. 
\end{theorem}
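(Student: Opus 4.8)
The plan is to derive Theorem~\ref{thm:nms} as an immediate consequence of the two preceding lemmas, so that no fresh combinatorics is needed. Lemma~\ref{lemma:symmetrymultiplier} already establishes that the total number of magic quad squares in the EvenQuads-$2^n$ deck equals
\[
2^n(2^n-1)(2^n-2)(2^n-4)(2^n-8)
\]
times the number of magic quad squares of type C, and Lemma~\ref{lemma:typeCmagic} supplies that type-C count as the cubic polynomial
\[
10 + 85(2^n - 16) + 43(2^n - 16)(2^n - 32) + (2^n - 16)(2^n - 32)(2^n-64)
\]
in $2^n$. First I would cite these two statements, then simply multiply the two factors; the product is exactly the expression claimed in the theorem, so the argument closes in a single line.

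To keep the write-up self-contained I would recall why each factor has the shape it does, without re-proving the lemmas. The first factor arises from normalizing an arbitrary magic quad square to type C via affine transformations of $\mathbb{Z}_2^n$, which preserve quads: forcing the first row to $0,1,2,3$ and the first column to $0,4,8,12$ can be carried out in $2^n(2^n-1)(2^n-2)(2^n-4)(2^n-8)$ ways, with every remaining entry of the first row and column then determined. The second factor is genuinely a degree-three polynomial in $2^n$ because, once the first row and column are fixed, a type-C magic square is pinned down by only three free parameters, namely $c_{2,2}$, $c_{2,3}$, and $c_{3,3}$, with $c_{3,2}$ forced by the diagonal condition $c_{3,2} = 15 \textrm{ XOR } c_{2,3}$. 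That cubic structure is precisely what guarantees the four computed data points $10, 1370, 70138, 1159994$ for decks of size $16, 32, 64, 128$ determine the coefficients uniquely.

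Because both inputs are already proved, there is no real obstacle in the theorem itself beyond verifying the arithmetic of the product and confirming that the resulting degree-eight polynomial in $2^n$ matches the stated form. The only place where care is genuinely required, and where the substantive work truly lives, is Lemma~\ref{lemma:typeCmagic}: one must be certain that the three-parameter count really is a cubic in $2^n$, so that four sample values suffice to interpolate it, and that those sampled values were computed correctly. Granting the two lemmas, the plug-in step is routine, and I would close by checking the final expression against the standard deck, substituting $n = 6$ to obtain $839946240 \cdot 70138 = 58912149381120$, which agrees with the value announced in the introduction and so provides independent confirmation that the product has been assembled correctly.
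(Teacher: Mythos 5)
Your proposal is correct and follows exactly the paper's own route: the paper proves Theorem~\ref{thm:nms} by the same one-line substitution of the type-C count from Lemma~\ref{lemma:typeCmagic} into the multiplier from Lemma~\ref{lemma:symmetrymultiplier}. Your added sanity check, $839946240 \cdot 70138 = 58912149381120$ for the standard deck, is accurate and consistent with the paper's stated value.
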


The sequence enumerating magic quad squares for the EvenQuads-$2^n$ decks, for $n \geq 4$ is now sequence A361613:
\[3225600,\ 27398246400,\ 58912149381120,\ 35354354296504320,\ 13112764372566835200, \ldots.\]

The coefficients in the expression in Theorem~\ref{thm:nms} represent the number of magic quad squares up to affine transformations such that the cards span a hyperplane of a particular dimension. Thus, the number of equivalence classes of magic quad squares in the EvenQuads-$2^n$ deck up to affine transformations is given by the following sequence that stabilizes. The sequence starts at index 4:
\[10,\ 95,\ 138,\ 139,\ 139,\ 139,\ 139, \ldots.\]

\section{Acknowledgments}

We are grateful to the MIT PRIMES STEP program and its director, Slava Gerovitch, for allowing us the opportunity to do this research.

\end{document}